\documentstyle[amssymb,amsfonts,12pt]{amsart}

\newtheorem{theorem}{Theorem}[section]
\newtheorem{lemma}[theorem]{Lemma}
\newtheorem{corollary}[theorem]{Corollary}
\newtheorem{proposition}[theorem]{Proposition}
\newtheorem{definition}[theorem]{Definition}

\theoremstyle{remark}
\newtheorem{remark}[theorem]{Remark}
\newtheorem*{ack*}{Acknowledgment}

\textwidth16.5cm
\topmargin0cm
\oddsidemargin0cm
\evensidemargin0cm
\textheight22.5cm

\def\D{{\operatorname{Dec}}}\def\Di{{\operatorname{Part}}}

\def\F{{\mathcal F}}\def\B{{\mathcal B}}
\def\R{{\mathbb R}}

\def\N{{\mathbb N}}
\def\C{{\mathbb C}}

\def\Z{{\mathbb Z}}
\def\P{{\mathcal P}}\def\nint{\mathop{\diagup\kern-13.0pt\int}}
\def\H{{\mathcal W}}
\def\W{{\mathbb P}}

\def\dist{{\operatorname{dist}}}

\def\bas{\begin{align*}}
\def\eas{\end{align*}}
\def\bi{\begin{itemize}}
\def\ei{\end{itemize}}
\newenvironment{proof}{\noindent {\bf Proof} }{\endprf\par}
\def \endprf{\hfill  {\vrule height6pt width6pt depth0pt}\medskip}
\def\emph#1{{\it #1}}

\begin{document}
\author{Jean Bourgain}
\address{School of Mathematics, Institute for Advanced Study, Princeton NJ}
\email{bourgain@@math.ias.edu}
\author{Ciprian Demeter}
\address{Department of Mathematics, Indiana University,  Bloomington IN}
\email{demeterc@@indiana.edu}
\thanks{The first author is partially supported by the NSF grant DMS-1301619. The second  author is partially supported  by the NSF Grant DMS-1161752}

\title[The $l^2$ Decoupling Theorem]{A study guide for the $l^2$ Decoupling Theorem}

\begin{abstract}
This paper contains a  detailed, self contained  and  more streamlined proof of our $l^2$ decoupling theorem for hypersurfaces from \cite{BD3}. We hope this  will serve as a good warm up for the readers interested in understanding the proof of Vinogradov's mean value theorem from \cite{BDG}.
\end{abstract}
\maketitle

\section{The theorem}

Consider the truncated (elliptic) paraboloid in $\R^n$
$$\W^{n-1}:=\{(\xi_1,\ldots,\xi_{n-1},\xi_1^2+\ldots+\xi_{n-1}^2):\;0\le \xi_i\le 1\}.$$
For each  cube $Q$ in $[0,1]^{n-1}$ and $g:Q\to\C$ define the extension operator $E^{(n)}_Q=E_Q$ as follows
$$E_Qg(x)=\int_Qg(\xi_1,\ldots,\xi_{n-1})e(\xi_1x_1+\ldots+ \xi_{n-1}x_{n-1}+(\xi_1^2+\ldots+\xi_{n-1}^2)x_n)d\xi,$$
where $e(z)=e^{2\pi iz}$,
$$\xi=(\xi_1,\ldots,\xi_{n-1}),$$
and
$$x=(x_1,\ldots,x_n).$$
This can be interpreted as the Fourier transform $\widehat{gd\sigma}$, where the measure $d\sigma$ is the lift of the Lebesgue measure from $[0,1]^{n-1}$ to the paraboloid.
When $Q=[0,1]^{n-1}$, we will sometimes write $Eg$ for $E_{[0,1]^{n-1}}g$.

\medskip

We will use the letters $Q,q$ to denote cubes  on the frequency side $[0,1]^{n-1}$. We will use the letters $B,\Delta$ to denote cubes on the spatial side $\R^n$. Throughout the whole paper we can and will implicitly assume that all cubes have side length in $2^{\Z}$. This in particular will place (harmless) restrictions on various  parameters such as $\delta,\sigma,R$, that we will not bother to write down explicitly. Thanks to this assumption we will be able  to partition (rather than use finitely overlapping covers) large cubes into smaller cubes. Given a  cube $Q\subset [0,1]^{n-1}$ with side length $l(Q)\in 2^{-\N}$ and  $\alpha\in 2^{-\N}$ smaller than $l(Q)$,  we will denote by $\Di_{\alpha}(Q)$  the (unique) partition of $Q$ using cubes $Q_{\alpha}$ of side length $\alpha$. A similar notation will occasionally be used for spatial cubes $B$.
\medskip

We will write  $B=B(c_B,R)$ for the cube in $\R^n$  centered at $c_B$ and with side length $l(B_R)=R$ and we will introduce the associated weight
$$w_{B}(x)= \frac{1}{(1+\frac{|x-c_B|}{R})^{100n}}.$$
The exponent $100n$ is chosen large enough to guarantee various integrability requirements.  We will see that Theorem \ref{main:steamline} remains  true for any larger exponent $E\ge 100n$, and the implicit bounds  will depend on $E$. This observation will allow us to run our induction argument, as explained in Section \ref{brief}.

For a positive weight $v:\R^n\to[0,\infty)$ and for $f:\R^n\to\C$ we define the weighted integral
$$\|f\|_{L^p(v)}=(\int_{\R^n}|f(x)|^pv(x)dx)^{1/p}.$$
\bigskip

For $2\le p\le \infty$ and $\delta\in 4^{-\N}$, let $\D(\delta,p)=\D_n(\delta,p)$ be the smallest constant such that the inequality
$$\|Eg\|_{L^p(w_B)}\le \D(\delta,p)(\sum_{Q\in\Di_{\delta^{1/2}}([0,1]^{n-1})}\|E_Qg\|_{L^p(w_B)}^2)^{1/2},$$
holds for every cube $B\subset \R^n$ with side length  $\delta^{-1}$ and  every $g:[0,1]^{n-1}\to\C$.
\medskip

The $l^2$ decoupling theorem proved in \cite{BD3} reads as follows. We refer the reader to \cite{BD3} for a few applications that motivate the theorem.

\begin{theorem}
\label{main:steamline}
We have the following sharp (up to $\delta^{-\epsilon}$ losses) upper bound for $\D_n(\delta,p)$
$$\D_n(\delta,p)\lesssim_{\epsilon,p,n}\delta^{-\epsilon}$$ if $2\le p\le \frac{2(n+1)}{n-1}.$
The implicit constant depends on $\epsilon,p,n$ but not on $\delta$.

\end{theorem}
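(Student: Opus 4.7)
The plan is to bootstrap the desired bound by induction on the scale $\delta$. Since $\D_n(\delta,2)\lesssim 1$ by Plancherel and $\D_n$ is log-convex in $1/p$, it suffices to handle the critical exponent $p_n:=2(n+1)/(n-1)$; all smaller $p\in[2,p_n]$ then follow by interpolation with the trivial $L^2$ bound. Throughout I rely on the flexibility (remarked in the excerpt) that the defining inequality is valid for any weight exponent $E\ge 100n$, so that wave packet decompositions and affine changes of variables can be carried out with the resulting polynomial-weight losses absorbed back into $w_B$. The first ingredient is \emph{parabolic rescaling}: for any $q\subset[0,1]^{n-1}$ of side $\sigma\ge\delta^{1/2}$, the affine map $\xi\mapsto\sigma\xi+c_q$ lifts to a symmetry of $\W^{n-1}$ and converts $E_q g$ into a full extension operator at scale $\delta/\sigma^2$, yielding
$$\|E_q g\|_{L^{p_n}(w_B)}\lesssim \D_n(\delta/\sigma^2,p_n)\Bigl(\sum_{Q\in\Di_{\delta^{1/2}}(q)}\|E_Q g\|_{L^{p_n}(w_B)}^2\Bigr)^{1/2}.$$

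The second ingredient is a \emph{multilinear} decoupling. Fix $K\gg 1$ and $n$ caps $q_1,\ldots,q_n\in\Di_{K^{-1}}([0,1]^{n-1})$ whose normal vectors on $\W^{n-1}$ are quantitatively transversal (span a parallelepiped of volume $\gtrsim K^{-O(1)}$). After decomposing each $E_{q_i}g$ into wave packets adapted to tubes dual to the $\delta^{1/2}$-subcaps, transversality of the $q_i$ upgrades to transversality of the tube directions, at which point the Bennett--Carbery--Tao multilinear Kakeya inequality produces an $n$-linear restriction/decoupling estimate at $p_n$ whose constant is $\lesssim_\epsilon K^{O(1)}\delta^{-\epsilon}$ with no recursion on $\delta$. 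A short Cauchy--Schwarz at the wave packet level converts the lower exponent $p_n(n-1)/(n+1)$ provided by multilinear Kakeya into the $l^2$ square function on the right.

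Finally, I invoke a Bourgain--Guth broad/narrow dichotomy to convert the multilinear statement into a linear one. At each $x\in B$, either $n$ caps from $\Di_{K^{-1}}([0,1]^{n-1})$ that are quantitatively transversal contribute within a constant factor of $\max_q |E_q g(x)|$ (the \emph{broad} case, handled by the multilinear bound), or all sizeable contributions come from a small family of non-transversal caps clustered inside the $K^{-1}$-neighborhood of an affine hyperplane (the \emph{narrow} case), which by parabolic rescaling reduces to the linear decoupling at scale $K^2\delta$. Pigeonholing on $x$, integrating over $B$, and applying the inductive hypothesis at the coarser scale produces a self-improving recurrence of the form
$$\D_n(\delta,p_n)\le C(K,\epsilon)\,\delta^{-\epsilon/2}+C_\epsilon K^{-\eta}\,\D_n(K^2\delta,p_n),\qquad \eta>0.$$
Iterating $O(\log\delta^{-1}/\log K)$ times and optimizing $K=K(\epsilon)$ then gives $\D_n(\delta,p_n)\lesssim_\epsilon \delta^{-\epsilon}$.

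I expect the main obstacle to be preserving the gain $K^{-\eta}$ through the narrow iteration inside an $l^2$-valued inequality rather than for a single $L^{p_n}$ norm: each reapplication of Minkowski or Cauchy--Schwarz threatens polynomial-in-$K$ losses that would swamp the gain, so the broad/narrow step has to be arranged so that these losses appear only once per scale. Managing the simultaneous propagation of the weight $w_B$ through parabolic rescaling and wave packet decomposition --- which is precisely why we need the weight exponent to be an arbitrary $E\ge 100n$ rather than a fixed value --- is the technical accounting that makes the induction on $\delta$ close.
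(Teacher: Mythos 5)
Your proposal correctly identifies three of the four main ingredients: parabolic rescaling, the use of multilinear Kakeya, and a Bourgain--Guth broad/narrow dichotomy. What is missing is the multiscale iteration (the paper's Section 10, culminating in inequality \eqref{jhfvheruyg7erigt7t86fo}), and this is not a technical detail that can be filled in---it is the heart of the argument. Two specific claims in your outline do not hold:

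First, the multilinear Kakeya (or even the full multilinear restriction) inequality does not by itself yield an $n$-linear decoupling at $p_n=\frac{2(n+1)}{n-1}$ with constant $\lesssim_\epsilon K^{O(1)}\delta^{-\epsilon}$. Multilinear Kakeya lives at the exponent $\frac{2n}{n-1}<p_n$, and the Cauchy--Schwarz you invoke to pass to $p_n$ costs a fixed positive power of $\delta^{-1}$ (roughly the count of $\delta^{1/2}$-caps to a power), not $\delta^{-\epsilon}$. Eliminating that power is exactly what requires the iteration: one passes from scale $\delta^{-1}$ to scale $\delta^{-2}$ using multilinear Kakeya plus $L^2$ decoupling, but the resulting inequality (Proposition~\ref{hegfuyefuyrufyuryfuryowqiuoiuwdoiwd9438u9uirhcfj}) is only a \emph{partial} decoupling---it carries a factor $D_p(1,B^2,g)^{\kappa_p}$ with $\kappa_p>0$---and one must iterate it through $m$ dyadic scales and exploit that $\kappa_p<\frac12$ for $p<p_n$ to make the accumulated losses geometrically summable. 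Your outline replaces this entire engine with a single Cauchy--Schwarz and a claimed $\delta^{-\epsilon}$, which is false.

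Second, the narrow case in the broad/narrow split does not come with a $K^{-\eta}$ gain. Decoupling near a hyperplane via the $(n-1)$-dimensional result and then rescaling each $K^{-1}$-cap gives $K^{\epsilon}\D_n(K^2\delta,p)$, not $K^{-\eta}\D_n(K^2\delta,p)$; there is no power gain because you are already inside an $l^2$ square function, and the narrow caps are not \emph{fewer} in a way that $l^2$ cares about. The paper's actual recurrence (Theorem~\ref{ch ft7wbtfgb6n17r782brym9,iqmivpk[l}) is $\D_n(R^{-1},p)\lesssim R^{\epsilon(\nu)}(1+\sup\D_n(\cdot,p,\nu,m))$, with no gain factor, and the improvement is extracted separately by bounding the multilinear constant by $\delta^{-(\eta_p+\epsilon)\gamma(m)}\delta^{-O(2^{-m})}$ with $\gamma(m)<1$ (inequality \eqref{dschjgdshdcyfyuvtioduewuyt}), then playing this against the definition of the asymptotic exponent $\eta_p$ and sending $m\to\infty$. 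This bootstrap on $\eta_p$ rather than a direct $K^{-\eta}$ gain is what closes the induction, and your recurrence $\D_n(\delta,p_n)\le C\delta^{-\epsilon/2}+CK^{-\eta}\D_n(K^2\delta,p_n)$ would never be established. (Relatedly, the paper works at $p<p_n$ and passes to $p_n$ by a limiting argument rather than attempting $p_n$ directly; at $p=p_n$ one has $\kappa_{p_n}=\frac12$ exactly and the multiscale iteration degenerates, so starting at the critical exponent as you propose is precisely the wrong place to start.)
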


We will present a rather detailed argument for this theorem. Essentially, we rewrite our original argument from  \cite{BD3} using a more streamlined approach. This  approach has started to take shape in our subsequent papers on decouplings and has gotten to this final form in the joint work with Guth \cite{BDG}. One new feature of our argument compared to  \cite{BD3} is that we avoid the special interpolation  from \cite{BD3}, that relied on wave-packet decomposition. Another one is that we use the multilinear Kakeya inequality, rather than the multilinear restriction theorem. The argument we describe here also clarifies various technical aspects of the theory, such as the role of the weights $w_B$ and the (essentially) locally constant behavior of Fourier transforms of measures supported on caps on the paraboloid.
\medskip

We hope the argument will be  accessible to experts outside the area of harmonic analysis. We believe this  will serve as a warm up for the readers interested in understanding the proof of Vinogradov's mean value theorem from \cite{BDG}.
\medskip

A brief summary of the argument is presented in Section \ref{brief}. The most important sections are the last two. The details from the remaining sections may be skipped at the first reading.

\begin{ack*}
The authors are grateful to Zane Li and Terry Tao for pointing out a few inaccuracies in an earlier version of this manuscript.
\end{ack*}

\bigskip

\section{More notation}
\label{not}

Throughout the paper we will write $A\lesssim_{\upsilon}B$ to denote the fact that $A\le CB$ for a certain implicit constant $C$ that depends on the parameter $\upsilon$. Typically, this parameter is either $\epsilon,\nu$ or $K$. The implicit constant will never depend on the scale $\delta$, on the spatial cubes we integrate over, or on the function $g$. It will however most of the times depend on the degree $n$ and on the Lebesgue index $p$. Since these can  be thought of as being fixed parameters, we will in general not write $\lesssim_{p,n}$.

We will denote by $B_R$ an arbitrary cube in $\R^n$ of side length  $l(B_R)=R$. We use the following two notations for averaged integrals
$$\nint_B F=\frac1{|B|}\int_BF,$$
$$\|F\|_{L^p_\sharp(w_B)}=(\frac1{|B|}\int|F|^pw_B)^{1/p}.$$Given a function $\eta$ on $\R^n$ and a cube $B=B(c,R)$ in $\R^n$, we will use the rescaled version
$$\eta_B(x)=\eta(\frac{x-c}{R}).$$
$|A|$ will refer to either the cardinality of $A$ if $A$ is finite, or to its Lebesgue measure if $A$ has positive measure.

We will sometimes write $\langle f,g\rangle$ for the inner product $\int f\bar{g}$.

\bigskip

\section{A brief description of the argument}
\label{brief}

We use  two types of mechanisms to decouple. One is  the $L^2$ decoupling (Section \ref{L2}). This is very basic, it relies just on Hilbert space orthogonality, but it is  nevertheless very efficient. It decouples into frequency cubes whose side length  is as small as permitted by the uncertainty principle, namely equal to the reciprocal of the side length of the spatial cube. The second mechanism is a multilinear decoupling that relies on the multilinear Kakeya inequality, see  Theorem \ref{nt4}. Combining these with multiple iterations leads to the multiscale inequality \eqref{jhfvheruyg7erigt7t86fo}. This inequality has a very simple form when $2\le p\le \frac{2n}{n-1}$, and a short warm up argument is presented in the end of Section \ref{sec:it} to prove Theorem \ref{main:steamline} in this range.

For the general case, the argument will go as follows. We will introduce a family of constants $\D_n(\delta,p,\nu,m)$ and will show in Section \ref{s10} that they dominate  $\D_n(\delta,p)$. On the other hand, in the last section we use \eqref{jhfvheruyg7erigt7t86fo} to show that each $\D_n(\delta,p,\nu,m)$ can be controlled by a combination of powers of $\delta$ and some power of $\D_n(\delta,p)$, see \eqref{dschjgdshdcyfyuvtioduewuyt}. This inequality represents an improvement over the trivial estimate
$\D_n(\delta,p,\nu,m)\lesssim \D_n(\delta,p).$
By playing the two bounds (\eqref{dschjgdshdcyfyuvtioduewuyt} and \eqref{dbhcgvbdduiqwyfyueeeoiowuydrtuiwqo}) against each other we arrive at the desired upper bound
$$\D_n(\delta,p)\lesssim_\epsilon\delta^{-\epsilon}.$$

An unfortunate technicality is the fact that  we will need to work with the family of weights for a cube $B=B(c,R)$ in $\R^n$
$$w_{B,E}(x)=\frac1{(1+\frac{|x-c|}{R})^{E}}.$$
Here $E\ge 100n$. For each such exponent $E$ we will let as before $\D_n(\delta,p,E)$ denote the smallest constant that guarantees the following inequality for each $g,B=B_{\delta^{-1}}$
$$\|Eg\|_{L^p(w_{B,E})}\le \D_n(\delta,p,E)(\sum_{Q\in\Di_{\delta^{1/2}}([0,1]^{n-1})}\|E_Qg\|_{L^p(w_{B,E})}^2)^{1/2}.$$
All the quantities that will depend on weights will implicitly depend on $E$. This includes $\D_n(\delta,p,\nu,m)$, $D_t(q,B^r,g)$ and
$A_p(q,B^r,s,g)$. Sometimes we will suppress the dependence on $E$ and will understand implicitly that the inequality is true for all $E\ge 100n$. The weight $w_{B,E}$ will always be the same on both sides of a given inequality. The implicit constants will depend on $E$ but that is completely harmless.

We will prove Theorem \ref{main:steamline} using induction on the dimension $n$. We set a superficially stronger induction hypothesis, namely we will assume that
$$\D_{n-1}(\delta,p,E)\lesssim_{\epsilon,E}\delta^{-\epsilon}$$
for each $2\le p\le \frac{2n}{n-2}$ and each $E\ge 100(n-1)$. We will use this to prove that
$$\D_{n}(\delta,p,E)\lesssim_{\epsilon,E}\delta^{-\epsilon}$$
for each $2\le p\le \frac{2(n+1)}{n-1}$ and each $E\ge 100n$.
The reason for such a hypothesis is coming from inequality
\eqref {huregyerwiout8rtut9u56uy956iy89790}, which essentially uses the lower dimensional constant $\D_{n-1}(\delta,p,F)$ to make a statement about $\D_{n}(\delta,p,E)$. Larger dimensions demand higher values of $E$ due to integrability requirements.
\bigskip

\section{A few useful inequalities}

One technical challenge  in the proof of Theorem \ref{main:steamline} is to preserve the exponent $E$ for the weights  $w_B$ involved in various inequalities.

A key, easy to check property of the weights $w_B=w_{B,E}$ that will be used extensively is the following inequality
\begin{equation}
\label{jgiou tiguyt reym89r--9048-0023=-1r90-8=4-293=-0}
1_B\lesssim \sum_{\Delta\in \B}w_{\Delta}\lesssim w_B,
\end{equation}
valid for all cubes $B$ with  $l(B)=R$ and all finitely overlapping covers $\B$ of $B$ with cubes $\Delta$ of (fixed) side length $1\le R'\le R$. The implicit constants in \eqref{jgiou tiguyt reym89r--9048-0023=-1r90-8=4-293=-0} will (harmlessly) depend on $E$, but crucially, they will be independent of $R,R'$.
\medskip

We will find extremely useful the following simple result.

\begin{lemma}
\label{nl9}
Let $\H$ be the collection of all weights, that is, positive, integrable functions on $\R^n$. Fix $R>0$.
Fix $E$. Let $O_1,O_2:\H\to[0,\infty]$ have the following four properties:

(W1) $O_1(1_B)\lesssim O_2(w_{B,E})$ for all cubes $B\subset \R^n$ of side length $R$

(W2) $O_1(\alpha u+\beta v)\le \alpha O_1(u)+\beta O_1(v)$, for each $u,v\in\H$ and $\alpha,\beta>0$

(W3) $O_2(\alpha u+\beta v)\ge \alpha O_2(u)+\beta O_2(v)$, for each $u,v\in\H$ and $\alpha,\beta>0$

(W4) If $u\le v$ then $O_i(u)\le O_i(v)$.

Then $$O_1(w_{B,E})\lesssim O_2(w_{B,E})$$
for each cube $B$ with side length $R$. The implicit constant is independent of $R,B$ and only depends on the implicit constant from (W1), on $E$ and $n$.
\end{lemma}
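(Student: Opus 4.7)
My plan is to sandwich $w_{B,E}$ between a sum of indicator functions of translated $R$-cubes (on which one can invoke (W1)) and a matching sum of re-centered weights (which will be controlled pointwise by $w_{B,E}$ itself); the four axioms then chain in order.

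Concretely, I would tile $\R^n$ by cubes $B_k$ of side length $R$ centered at $c_B+Rk$, $k\in\Z^n$, and set $c_k:=(1+|k|)^{-E}$. The triangle inequality shows that $1+|x-c_B|/R$ and $1+|k|$ agree up to a dimensional constant whenever $x\in B_k$, which yields the pointwise bound $w_{B,E}(x)\le C_{n,E}\sum_k c_k\,1_{B_k}(x)$. The key technical estimate, and in my view the only real obstacle, is the dual pointwise inequality
$$\sum_{k\in\Z^n} c_k\,w_{B_k,E}(x)\;\lesssim_{n,E}\;w_{B,E}(x).$$
Writing $y:=(x-c_B)/R$, this reduces to $\sum_{k\in\Z^n}(1+|k|)^{-E}(1+|y-k|)^{-E}\lesssim (1+|y|)^{-E}$, which I would prove by splitting $\Z^n$ into $\{|k|\le|y|/2\}$, $\{|y|/2<|k|<2|y|\}$, and $\{|k|\ge 2|y|\}$: in each region one of the two factors already furnishes the required decay $(1+|y|)^{-E}$, while the other sums to a finite constant because $E\ge 100n>n$.

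Before assembling the chain, I would observe that (W2) with $v=u$ gives $O_1(\lambda u)\le\lambda O_1(u)$ for every $\lambda>0$, and substituting $u\mapsto\lambda u$ and $\lambda\mapsto 1/\lambda$ produces the reverse inequality; hence $O_1$ is positively homogeneous, and the analogous argument using (W3) shows the same for $O_2$. Iterating (W2) and (W3) in countably many variables (justified by truncating to $|k|\le N$ and letting $N\to\infty$ by monotone convergence, with the decay $c_k=(1+|k|)^{-E}$ controlling the tail), the proof concludes with the chain
\begin{align*}
O_1(w_{B,E}) &\le O_1\Bigl(C_{n,E}\sum_k c_k\,1_{B_k}\Bigr) \le C_{n,E}\sum_k c_k\,O_1(1_{B_k}) \\
&\lesssim \sum_k c_k\,O_2(w_{B_k,E}) \le O_2\Bigl(\sum_k c_k\,w_{B_k,E}\Bigr) \lesssim O_2(w_{B,E}),
\end{align*}
where the successive steps use (W4), (W2) together with homogeneity, (W1), (W3), and finally the pointwise bound of the second paragraph combined with (W4) and homogeneity of $O_2$. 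Everything beyond that pointwise bound is formal manipulation.
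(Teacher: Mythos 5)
Your proof is correct and follows essentially the same route as the paper's: cover $\R^n$ by $R$-cubes, sandwich $w_{B,E}$ between $\sum_k c_k 1_{B_k}$ and a constant times itself via $\sum_k c_k w_{B_k,E}$, and chain (W4), (W2), (W1), (W3), (W4). Indeed your coefficients $c_k=(1+|k|)^{-E}$ are precisely the paper's $w_{B,E}(c_{B'})$ evaluated at the cube centers $c_{B'}=c_B+Rk$; you have simply spelled out the verification of the second pointwise bound, the positive homogeneity of $O_1,O_2$, and the passage to countable sums, all of which the paper leaves implicit.
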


We will sometimes be able to check a stronger assumption than (W1), where $O_2(w_B)$ is replaced with $O_2(\eta_B)$ for some rapidly decreasing function $\eta$.
\medskip

\begin{proof}
Let $\B$ be a finitely overlapping cover of $\R^n$ with cubes $B'=B'(c_{B'},R)$.
It suffices to note that
$$w_B(x)\lesssim \sum_{B'\in\B}1_{B'}(x)w_B(c_{B'})$$
and that
$$\sum_{B'\in\B}w_{B'}(x)w_B(c_{B'})\lesssim w_B(x).$$
\end{proof}

\begin{remark}
\label{aha}
It is rather immediate that for each $f$
$$O_1(v):=\|f\|_{L^p(v)}^p$$
satisfies (W2) and (W4). Also, for fixed $p\ge 2$ and $f_i$, Minkowski's inequality in $l_{\frac{2}p}$ shows that
$$O_2(v):=(\sum_{i}\|f_i\|_{L^p(v)}^2)^{\frac{p}{2}}$$
satisfies (W3) and (W4). Most applications of Lemma \ref{nl9} will use this type of operators.
\end{remark}

We close this section with the following reverse H\"older inequality.
\begin{corollary}
For each $q\ge p\ge 1$, each cube $Q\subset [0,1]^{n-1}$ with $l(Q)=\frac1{R}$ and each cube $B$ in $\R^n$ with  $l(B)=R$ we have
\begin{equation}
\label{ nghbugtrt90g0-er9t-9}
\|E_{Q}g\|_{L^q_{\sharp}(w_{B,E})}\lesssim \|E_{Q}g\|_{L^p_\sharp(w_{B,\frac{Ep}{q}})},
\end{equation}
with the implicit constant independent of $R$, $Q$, $B$ and $g$.
\end{corollary}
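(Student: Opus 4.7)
The plan is to exploit the locally constant behavior of $|E_Qg|^p$ at spatial scale $R$, which follows from $\widehat{E_Qg}$ being supported on a cap of diameter $\lesssim 1/R$. First I would modulate out the center frequency of $Q$ to obtain a function $F$ with $|F|=|E_Qg|$ whose Fourier transform is supported in a ball of radius $\lesssim 1/R$ about the origin. Choosing a Schwartz bump $\Phi$ with $\widehat\Phi\equiv 1$ on that ball and rescaled to scale $R$ (so $\Phi(x)=R^{-n}\Phi_0(x/R)$) one has $F=F\ast\Phi$, and applying H\"older in $y$ against the measure $|\Phi(x-y)|dy$ yields, for any $M\ge 1$,
\[
|E_Qg(x)|^p\lesssim_M\frac{1}{R^n}\int|E_Qg(y)|^p\Big(1+\frac{|x-y|}{R}\Big)^{-M}dy.
\]

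Next I would tile $\R^n$ by a finitely overlapping family $\{B'\}$ of cubes of side $R$. For $x\in B'$ the triangle inequality gives $w_{B,E}(x)\asymp w_{B,E}(c_{B'})$ and $(1+|x-y|/R)^{-M}\lesssim w_{B',M}(y)$, so the pointwise display upgrades to $\sup_{B'}|E_Qg|^p\lesssim R^{-n}\int|E_Qg|^p w_{B',M}$. Raising to the $q/p$ power and integrating on $B'$ against $w_{B,E}$ gives
\[
\int_{B'}|E_Qg|^q w_{B,E}\lesssim R^{n(1-q/p)}\, w_{B,E}(c_{B'})\Big(\int|E_Qg|^p w_{B',M}\Big)^{q/p}.
\]

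Summing over $B'$, the critical step is to pull the power $q/p$ outside the sum. The elementary inequality $\sum_i\lambda_i a_i^r\le\bigl(\sum_i\lambda_i^{1/r}a_i\bigr)^r$ for $r\ge 1$, $\lambda_i,a_i\ge 0$ (which follows from $(\sum_i b_i)^r\ge\sum_i b_i^r$), applied with $r=q/p$ and $\lambda_{B'}=w_{B,E}(c_{B'})$ (so that $\lambda_{B'}^{p/q}=w_{B,Ep/q}(c_{B'})$), produces
\[
\int|E_Qg|^q w_{B,E}\lesssim R^{n(1-q/p)}\Big(\int|E_Qg|^p\sum_{B'}w_{B,Ep/q}(c_{B'})w_{B',M}(y)\,dy\Big)^{q/p}.
\]
A standard lattice-sum/convolution estimate (compare to $(1+|\cdot|)^{-Ep/q}\ast(1+|\cdot|)^{-M}$) shows that for $M\ge\max(n+1,Ep/q)$ the inner sum is $\lesssim w_{B,Ep/q}(y)$. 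Substituting this in and then taking $q$th roots after dividing through by $R^n$ gives the claimed bound.

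The main obstacle is the weight mismatch: the two sides carry weights with different exponents ($E$ versus $Ep/q$), so Lemma~\ref{nl9} does not apply directly. The maneuver of pulling $q/p$ outside the $B'$-sum is the key step, and it is precisely what forces the exponent on the right-hand side to drop from $E$ to $Ep/q$, which is exactly what the statement requires.
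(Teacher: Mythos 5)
Your proof is correct and follows essentially the same strategy as the paper: a local Bernstein-type estimate on scale-$R$ cubes $B'$, followed by the superadditivity inequality $(\sum b_i)^r\ge\sum b_i^r$ (with $r=q/p\ge1$) to pull the power out of the $B'$-sum, which is exactly what drops the weight exponent from $E$ to $Ep/q$. The only difference is in how the local Bernstein estimate is obtained: the paper multiplies $E_Qg$ by $\eta_B^{1/p}$ (with $\widehat{\eta^{1/p}}$ compactly supported) and applies Young's inequality with $1/q=1/p-1/r'$ to get $\|E_Qg\|_{L^q(B')}\lesssim R^{-n/r'}\|E_Qg\|_{L^p(\eta_{B'})}$, whereas you instead derive the pointwise reproducing bound $|E_Qg|^p\lesssim|E_Qg|^p*|\Phi|$ and take a sup over $B'$; the two realizations are equivalent in strength and lead to the identical weight-dropping computation.
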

\begin{proof}
Let $\eta$ be a positive smooth function on $\R^n$  satisfying $1_{B(0,1)}\le \eta_{B(0,1)}$ and such that the Fourier transform of $\eta^{\frac1p}$ is  supported on the cube $B(0,1)$.
We can thus write
$$\|E_{Q}g\|_{L^q(B)}\le \|E_{Q}g\|_{L^q(\eta_B^{\frac{q}{p}})}=\|\eta_B^{\frac1p}E_{Q}g\|_{L^q(\R^n)}.$$
Let $\theta$ be a Schwartz function which  equals to 1 on the cube $B(0,10)$.
Since the Fourier transform of $\eta_B^{\frac1p}E_{Q}g$ is supported in the cube $3Q$ (having the same center as $Q$ and side length three times as large), we have that
$$\eta_B^{\frac1p}E_{Q}g=(\eta_B^{\frac1p}E_{Q}g)*\widehat{\theta_Q}$$
and thus, by Young's inequality we can write
$$\|\eta_B^{\frac1p}E_{Q}g\|_{L^q(\R^n)}\le \|\eta_B^{\frac1p}E_{Q}g\|_{L^p(\R^n)}\|\widehat{\theta_Q}\|_{L^r(\R^n)}\lesssim R^{-n/r'}\|E_{Q}g\|_{L^p(\eta_B)}.$$
Here
$$\frac{1}q=\frac1{p}+\frac1{r}-1=\frac1p-\frac1{r'}.$$
Now, following the notation and ideas from the proof of Lemma \ref{nl9} we may use the previous inequalities to write
$$\int |E_Qg|^qw_{B,E}\lesssim \sum_{B'\in\B}w_{B,E}(c_{B'})\int_{B'} |E_Qg|^q$$
$$\lesssim R^{-\frac{nq}{r'}}\sum_{B'\in\B}w_{B,E}(c_{B'})\|E_{Q}g\|_{L^p(\eta_{B'})}^q$$
$$\lesssim R^{-\frac{nq}{r'}}(\sum_{B'\in\B}[w_{B,E}(c_{B'})]^{\frac{p}{q}}\|E_{Q}g\|_{L^p(\eta_{B'})}^p)^{\frac{q}{p}}$$
$$\lesssim R^{-\frac{nq}{r'}}(\int |E_Qg|^pw_{B,\frac{Ep}{q}})^{\frac{q}p}.$$
\end{proof}

\begin{remark}
\label{aha1}
Note that there is a loss in regularity in \eqref{ nghbugtrt90g0-er9t-9}, as the weight exponent is $\frac{Ep}{q}$ on the right hand side. A simple example shows that this exponent is optimal.  This will later cause some minor technicalities. In particular, it will force us to use the smaller weight $w_{\Delta,10E}$ (as opposed to $w_{\Delta,E}$)
in \eqref{ nbo wru45y9d0=1-=c0214=-`=2e4=23-4}. This will in turn allow us to go from \eqref{ fu9ety8290-78-=1=23 -r02v9=--e32} to \eqref{ fu9ety8290-78-=1=23 -r02v9=--e321} by using \eqref{ nghbugtrt90g0-er9t-9} for indices $p$ and $2$.
\end{remark}

\section{An equivalent formulation}

For $\delta<1$ and $Q\subset [0,1]^{n-1}$ define the $\delta$-neighborhood of $\W^{n-1}$ above $Q$ to be
$$N_{\delta}(Q)=\{(\xi_1,\ldots,\xi_{n-1},\xi_1^2+\ldots+\xi_{n-1}^2+t):\;\xi_i\in Q\text{ and }0\le t\le \delta\}.$$
For each $f:\R^n\to\C$ and $R\subset \R^n$ denote by $f_{R}$ the Fourier restriction of $f$ to $R$
$$f_R(x)=\int_R\widehat{f}(\xi)e(x\cdot\xi)d\xi.$$
In this section we will make repeated use of the following inequalities, where $B_R$ will refer to the cube centered at the origin in $\R^n$
\begin{equation}
\label{ryyy878t0934=0=4-56k90u=-56i90-870i=81}
w_{B_R,E}*(\frac1{(R')^n}w_{B_{R'},E})\lesssim w_{B_R,E},\;\;R'\le R
\end{equation}
and, when $n=2$
\begin{equation}
\label{ryyy878t0934=0=4-56k90u=-56i90-870i=82}
w_{B_R,E}(x_1,x_2)\le \left (\frac{1}{1+\frac{|x_1|}{R}}\right)^{E_1}\left (\frac{1}{1+\frac{|x_2|}{R}}\right)^{E_2}, \;\;E_1+E_2\le E.
\end{equation}
\bigskip

We will need the following alternate form of decoupling when we will derive inequality \eqref{huregyerwiout8rtut9u56uy956iy89790}.
\begin{theorem}
\label{alternative}
For each $E\ge 100n$,   the following statement is true for each $F\ge \Gamma_n(E)$, where $\Gamma_n(E)$ is a large enough constant depending on $E$ and $n$.
For $p\ge 2$, each $f:\R^n\to\C$ with Fourier transform supported in $N_{1/R}([0,1]^{n-1})$ and for each cube $B_R\subset \R^n$ we have
$$\|f\|_{L^p(w_{B_R,E})}\lesssim \D_n(R^{-1},p,F)(\sum_{Q\in\Di_{R^{-1/2}}([0,1]^{n-1})}\|f_{N_{1/R}(Q)}\|_{L^p(w_{B_R,E})}^2)^{1/2}.$$
\end{theorem}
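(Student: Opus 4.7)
The plan is to deduce the thickened decoupling from the standard (thin) paraboloid decoupling via a vertical slicing of $N_{1/R}$, combined with Lemma \ref{nl9} to convert a sharp cutoff on $B_R$ into the weight $w_{B_R,E}$. Parametrize $N_{1/R}([0,1]^{n-1})$ by $(\xi',t)\mapsto(\xi',|\xi'|^2+t)$ with $\xi'\in[0,1]^{n-1}$, $t\in[0,1/R]$, and set $g_t(\xi'):=\widehat f(\xi',|\xi'|^2+t)$. Fubini then yields
$$f(x)=\int_0^{1/R}e(tx_n)Eg_t(x)\,dt,\qquad f_{N_{1/R}(Q)}(x)=\int_0^{1/R}e(tx_n)E_Qg_t(x)\,dt,$$
so both $f$ and each $f_{N_{1/R}(Q)}$ are $t$-integrals of paraboloid extensions.

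I apply Lemma \ref{nl9} with
$$O_1(v)=\|f\|_{L^p(v)}^p,\qquad O_2(v)=\Bigl(\sum_Q\|f_{N_{1/R}(Q)}\|_{L^p(v)}^2\Bigr)^{p/2}.$$
Conditions (W2)--(W4) are immediate from Remark \ref{aha}, so it suffices to verify the sharp-cutoff version
$$\|f\|_{L^p(B_R)}\lesssim \D_n(R^{-1},p,F)\Bigl(\sum_Q\|f_{N_{1/R}(Q)}\|_{L^p(w_{B_R,E})}^2\Bigr)^{1/2}.$$
For this, Cauchy--Schwarz in $t$ gives $|f(x)|^2\le R^{-1}\int_0^{1/R}|Eg_t(x)|^2\,dt$; raising to the $p/2$-power, integrating over $B_R$, and using Minkowski in $L^{p/2}(B_R)$ (valid for $p\ge 2$) together with $1_{B_R}\lesssim w_{B_R,F}$ produces
$$\|f\|_{L^p(B_R)}^2\lesssim R^{-1}\int_0^{1/R}\|Eg_t\|_{L^p(w_{B_R,F})}^2\,dt.$$
For each fixed $t$ the function $Eg_t$ is exactly the object governed by $\D_n(R^{-1},p,F)$; applying that decoupling bound and exchanging sum and integral gives
$$\|f\|_{L^p(B_R)}^2\lesssim R^{-1}\D_n(R^{-1},p,F)^2\sum_Q\int_0^{1/R}\|E_Qg_t\|_{L^p(w_{B_R,F})}^2\,dt.$$

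The main obstacle is then the Plancherel-type comparison
$$\int_0^{1/R}\|E_Qg_t\|_{L^p(w_{B_R,F})}^2\,dt\lesssim R\,\|f_{N_{1/R}(Q)}\|_{L^p(w_{B_R,E})}^2,$$
which makes rigorous the intuition that on spatial scale $R$ the sharp cap and its $1/R$-thickening are indistinguishable. I would prove it by (i) replacing the power weight $w_{B_R,F}$ on the left by a Schwartz majorant $\eta_{B_R}^2$ with $\widehat\eta$ supported in $B(0,cR^{-1})$, comparable up to a constant depending on $F$; (ii) applying Plancherel in the $t$-variable via the identity $f_{N_{1/R}(Q)}(x',x_n)=\widehat H_{x'}(-x_n)$, where $H_{x'}(t):=1_{[0,1/R]}(t)E_Qg_t(x',x_n)$; and (iii) exploiting the locally constant behavior of $E_Qg_t$ on spatial scale $R^{1/2}$ (dictated by its Fourier cap of diameter $R^{-1/2}$) to transfer from weight $w_{B_R,F}$ on the left to $w_{B_R,E}$ on the right, the gap $F\ge\Gamma_n(E)$ being exactly what absorbs the losses from this transfer. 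Once the comparison is established, substitution yields hypothesis (W1), and Lemma \ref{nl9} promotes the sharp cutoff $1_{B_R}$ on the left to $w_{B_R,E}$, completing the proof.
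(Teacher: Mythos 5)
The proposal takes a genuinely different route from the paper, but the route has a fatal gap: the key ``Plancherel-type comparison'' in step (ii) is false.

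Your plan slices $N_{1/R}(Q)$ vertically, applies Cauchy--Schwarz in $t$ to pass from $|f(x)|^2$ to $R^{-1}\int_0^{1/R}|Eg_t(x)|^2\,dt$, decouples each $Eg_t$, and then tries to fold the $t$-integral of $\|E_Qg_t\|^2$ back into $\|f_{N_{1/R}(Q)}\|^2$. The problem is that Cauchy--Schwarz in $t$ is too lossy precisely when the fibers $g_t$ oscillate in $t$; that oscillation produces cancellation in $f(x)=\int_0^{1/R}e(tx_n)Eg_t(x)\,dt$ that is thrown away after squaring. No weight transfer can recover it, because Plancherel is an equality, not an over-estimate. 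Concretely, take $n=2$, $Q=[0,R^{-1/2}]$, and $g_t(\xi_1)=a(\xi_1)e(\lambda t)$ with $\lambda\gg R$ (this corresponds to a legitimate $f$ with $\widehat f(\xi_1,\xi_2)=a(\xi_1)e(\lambda(\xi_2-\xi_1^2))$ on $N_{1/R}(Q)$, i.e.\ a translate of a nicer function). Then $|E_Qg_t|=|E_Qa|$ is independent of $t$, so the left side of your comparison is $R^{-1}\|E_Qa\|_{L^p(w_{B_R,F})}^2$, whereas $f_{N_{1/R}(Q)}(x)=\bigl(\int_0^{1/R}e((\lambda+x_2)t)\,dt\bigr)E_Qa(x)$, and on $B_R$ (where $|x_2|\lesssim R\ll\lambda$) the prefactor has size $\sim\lambda^{-1}\ll R^{-1}$. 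Thus the right side is $\sim R\lambda^{-2}\|E_Qa\|^2\ll R^{-1}\|E_Qa\|^2$, and the claimed inequality fails by a factor $(\lambda/R)^2$ which can be arbitrarily large. Enlarging $F$ relative to $E$ does not help, since the loss has nothing to do with the weight tails.

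There is also a structural issue with the Plancherel heuristic itself: in step (ii) you write $f_{N_{1/R}(Q)}(x',x_n)=\widehat{H}_{x'}(-x_n)$ with $H_{x'}(t)=1_{[0,1/R]}(t)E_Qg_t(x',x_n)$, but $E_Qg_t(x',x_n)$ depends on $x_n$, so $H$ is really $H_{x',x_n}$ and $f_{N_{1/R}(Q)}(x)$ is a diagonal restriction $\widehat{H}_{x',x_n}(-x_n)$, not a single Fourier transform to which Plancherel applies in one variable. That diagonal restriction is exactly where the pointwise cancellation is encoded, and it is not accessible from $\int_0^{1/R}\|E_Qg_t\|^2\,dt$.

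The paper's proof avoids this trap. Rather than Cauchy--Schwarz in $t$, it Taylor expands $e(tx_n)=\sum_j\frac{(2\pi)^j}{j!}(2ix_n/R)^j(Rt/2)^j$, so that $|f|\le\sum_j\frac{(4\pi)^j}{j!}|\sum_QE_Qg_j|$ on $B_R$ with $g_j(s)=\int_0^{1/R}\widehat f(s,s^2+t)(Rt/2)^j\,dt$. Crucially these moments $g_j$ are computed \emph{before} any modulus is taken, so any oscillation of $\widehat f$ in the $t$-fiber (e.g.\ the factor $e(\lambda t)$ above) produces cancellation in $g_j$ itself, giving $|g_j|\sim|a|/\lambda$ in the example. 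What then remains to prove is $\|E_Qg_j\|_{L^p(w_{B_R,F})}\lesssim\|f_{N_{1/R}(Q)}\|_{L^p(w_{B_R,E})}$ uniformly in $j$, which is a multiplier/weight-transfer statement that the paper establishes by a further Taylor expansion in $(\xi_1^2-\xi_2)(x_2-y_2)$ and a careful three-case kernel estimate. If you want to salvage your slicing approach, you would need to replace Cauchy--Schwarz in $t$ by something that respects the oscillation in $t$ before taking absolute values --- which in effect forces you back to a moment/Taylor expansion.
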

\begin{proof}
To simplify notation we will show the computations when $n=2$. In this case $\Gamma_2(E)=2E+2$ will suffice.

Using Remark \ref{aha} it will suffice to prove
$$\|f\|_{L^p({B_R})}\lesssim \D_2(R^{-1},p,F)(\sum_{Q\in\Di_{R^{-1/2}}([0,1])}\|f_{N_{1/R}(Q)}\|_{L^p(w_{B_R,E})}^2)^{1/2}.$$
Due to translation/modulation invariance we may assume $B_R$ to be centered at the origin.

A change of variables allows us to write
$$f(x_1,x_2)=\int_{N_{1/R}([0,1])} \widehat{f}(\xi)e(\xi\cdot x)d\xi=$$$$\sum_{Q\in \Di_{R^{-1/2}}([0,1])}\int_{Q\times[0,\frac1R]}\widehat{f}(s,s^2+t)e(sx_1+s^2x_2)e(tx_2)dsdt.$$
Next, combining this with the Taylor expansion
$$e(tx_2)=\sum_{j\ge 0}\frac{(2\pi)^j}{j!}(\frac{2ix_2}{R})^j(\frac{Rt}{2})^j$$
we can write for $x\in B_R$
\begin{equation}
\label{uyugnugy84587ty458t0-899y085605690}
|f(x)|\le\sum_{j\ge 0}\frac{(4\pi)^j}{j!}|\sum_{Q\in \Di_{R^{-1/2}}([0,1])}E_Qg_j(x)|,
\end{equation}
where
$$g_j(s)=\int_{0}^{R^{-1}}\widehat{f}(s,s^2+t)(\frac{Rt}{2})^jdt.$$
Obviously \eqref{uyugnugy84587ty458t0-899y085605690} leads to the following inequality
$$\|f\|_{L^p({B_R})}\le \D_2(R^{-1},p,F)\sum_{j\ge 0}\frac{(4\pi)^j}{j!}(\sum_{Q\in \Di_{R^{-1/2}}([0,1])}\|E_Qg_j\|_{L^p(w_{B_R,F})}^2)^{1/2}.$$
It remains to prove that (note that we have $F$ on the left and $E$ on the right)
$$
\|E_Qg_j\|_{L^p(w_{B_R,F})}\lesssim \|f_{N_{1/R}(Q)}\|_{L^p(w_{B_R,E})},$$
uniformly over $j\ge 0$.

An easy computation allows us to assume $Q=[0,R^{-1/2}]$. Indeed, translating $[u,u+R^{-1/2}]$ to $[0,R^{-1/2}]$ on the frequency side will  replace $(x_1,x_2)$ with $(x_1+2ux_2,x_2)$ on the spatial side. Note that when $0\le u\le 1$ these shear transformations affect the weights $w_{B}$ only negligibly.

We start by writing
$$\|E_Qg_j\|_{L^p(w_{B_R,F})}^p\sim \int\|E_Qg_j\|_{L^p_\sharp({B(y,R)})}^pw_{B_R,F}(y)dy.$$
Recall that
$$E_Qg_j(x)=\int_{N_{1/R}(Q)}\widehat{f}(\xi)(\frac{R(\xi_2-\xi_1^2)}2)^je((\xi_1^2-\xi_2)x_2)e(\xi\cdot x)d\xi$$
For $x\in B(y,R)$ we write
$$e((\xi_1^2-\xi_2)x_2)=e((\xi_1^2-\xi_2)y_2)e((\xi_1^2-\xi_2)(x_2-y_2))$$
and apply another Taylor expansion for $e((\xi_1^2-\xi_2)(x_2-y_2))$ to arrive at
$$|E_Qg_j(x)|\le \sum_{k\ge 0}\frac{(4\pi)^k}{k!}\left|\int_{N_{1/R}(Q)}\widehat{f}(\xi)(\frac{R(\xi_2-\xi_1^2)}2)^{j+k}e((\xi_1^2-\xi_2)y_2)e(\xi\cdot x)d\xi\right|.$$
It now remains to prove
$$\int \left\|\int_{N_{1/R}(Q)}\widehat{f}(\xi)(\frac{R(\xi_2-\xi_1^2)}2)^{j}e((\xi_1^2-\xi_2)y_2)e(\xi\cdot x)d\xi\right\|_{L^p_\sharp({B(y,R)})}^pw_{B_R,F}(y)dy\lesssim$$$$ \|f_{N_{1/R}(Q)}\|_{L^p(w_{B_R,E})}^p,$$
uniformly over $j\ge 0$.
\medskip

We write
\smallskip

$$\int_{N_{1/R}(Q)}\widehat{f}(\xi)(\frac{R(\xi_2-\xi_1^2)}2)^{j}e((\xi_1^2-\xi_2)y_2)e(\xi\cdot x)d\xi=\int \widehat{F}(\xi)m_j(\xi)e(\xi_1x_1+\xi_2(x_2-y_2))d\xi.$$
where
$$m_j(\xi)=m_{j,y_2}(\xi)=e(\xi_1^2y_2)(\frac{R(\xi_2-\xi_1^2)}{2})^{j}1_{[0,1/2]}(\frac{R(\xi_2-\xi_1^2)}2)\eta(R^{1/2}\xi_1)\eta(R\xi_2),$$
$\eta$ is a Schwartz function equal to 1 on $[-2,2]$ and supported in $[-3,3]$, and
$$F=f_{N_{1/R}(Q)}.$$

 Let $M_j(t)$ be a compactly supported Schwartz function which agrees with $t^j$ on $[0,1/2]$ and satisfies the derivative bound
\begin{equation}
\label{tu54t458568796788-i9078=-7809-7809-=7=354=656=45}
\|\frac{d^k}{dt^k}M_j\|_{L^\infty(\R)}\lesssim_k1,
\end{equation}
uniformly over $j\ge 0$, for each $k\ge 0$.

Note  that we can also write
\smallskip

$$
\int_{N_{1/R}(Q)}\widehat{f}(\xi)(\frac{R(\xi_2-\xi_1^2)}2)^{j}e((\xi_1^2-\xi_2)y_2)e(\xi\cdot x)d\xi=\int \widehat{F}(\xi)\tilde{m}_j(\xi)e(\xi_1x_1+\xi_2(x_2-y_2))d\xi
$$
where
$$\tilde{m}_{j,y_2}(\xi)=\tilde{m}_j(\xi)=e(\xi_1^2y_2)M_j(\frac{R(\xi_2-\xi_1^2)}2)\eta(R^{1/2}\xi_1)\eta(R\xi_2).$$
Applying H\"older we arrive at
$$\int \left\|\int   \widehat{F}(\xi)\tilde{m}_j(\xi)e(\xi_1x_1+\xi_2(x_2-y_2))d\xi        \right\|_{L^p_\sharp({B(y,R)})}^pw_{B_R,F}(y)dy\lesssim $$
$$\int \int |F|^p*|\widehat{\tilde{m}_j}|(x)R^{-2}1_{B_R}(x_1-y_1,x_2)w_{B_R,F}(y)dxdy=$$
$$\int|F|^p(x')\left[\int\int|\widehat{\tilde{m}_j}|(x-x')R^{-2}1_{B_R}(x_1-y_1,x_2)w_{B_R,F}(y)dxdy\right]dx'.$$
It remains to show that
$$\int|\widehat{\tilde{m}_j}|*(R^{-2}1_{B_R})(y_1-x_1,-x_2)w_{B_R,F}(y)dy\lesssim w_{B_R,E}(x).$$
In fact, we will prove a slightly stronger inequality
\begin{equation}
\label{ryyy878t0934=0=4-56k90u=-56i90-870i=8}
\int|\widehat{\tilde{m}_j}|*(R^{-2}1_{B_R})(y_1-x_1,-x_2)w_{B_R,F}(y)dy\lesssim (1+\frac{|x_1|}{R})^{-E}(1+\frac{|x_2|}{R})^{-E}.
\end{equation}

An easy computation using \eqref{tu54t458568796788-i9078=-7809-7809-=7=354=656=45} shows that for each $s_1,s_2\ge 0$
$$\|\partial_{\xi_1}^{s_1}\partial _{\xi_2}^{s_2}\tilde{m}_j\|_{L^\infty}\lesssim_{s_1,s_2} (R^{1/2}+\frac{|y_2|}{R^{1/2}})^{s_1}R^{s_2}.$$
Combining this with the fact that $\tilde{m}_j$ is compactly supported in $[-R^{-1/2},R^{-1/2}]\times [-R^{-1}\times R^{-1}]$ leads, via repeated integration by parts, to the following estimate for the Fourier transform
$$|\widehat{\tilde{m}_j}(x_1,x_2)|\le \phi_1(x_1)\phi_2(x_2)$$
where
\begin{equation}
\label{ryyy878t0934=0=4-56k90u=-56i90-870i=83}
\phi_1(x_1)\lesssim_{s_1}\frac1{R^{1/2}}\left(\frac{1}{1+\frac{|x_1|}{R^{1/2}+R^{-1/2}|y_2|}}\right)^{s_1}
\end{equation}
and
\begin{equation}
\label{ryyy878t0934=0=4-56k90u=-56i90-870i=84}
\phi_2(x_2)\lesssim_{s_2}\frac1{R}\left(\frac1{1+\frac{|x_2|}{R}}\right)^{s_2}.
\end{equation}
Let $I_R=[-R/2,R/2]$ and recall that $B_R=I_R\times I_R$. Using \eqref{ryyy878t0934=0=4-56k90u=-56i90-870i=84} and \eqref{ryyy878t0934=0=4-56k90u=-56i90-870i=81} ($n=1$) we may now write
$$\int(|\widehat{\tilde{m}_j}|*(R^{-2}1_{B_R}))(y_1-x_1,-x_2)w_{B_R,F}(y)dy \le $$
$$(\phi_2*(\frac1R1_{I_R}))(-x_2)\int (\phi_1*(\frac1{R}I_R))(y_1-x_1)w_{B_R,F}(y)dy\lesssim$$
$$\frac1{R}(1+\frac{|x_2|}{R})^{-E}\int (\phi_1*(\frac1{R}I_R))(y_1-x_1)w_{B_R,F}(y)dy.$$
Recalling \eqref{ryyy878t0934=0=4-56k90u=-56i90-870i=8}, we are left with proving that
\begin{equation}
\label{ryyy878t0934=0=4-56k90u=-56i90-870i=85}
\int (\phi_1*(\frac1{R}I_R))(y_1-x_1)w_{B_R,F}(y)dy\lesssim R(1+\frac{|x_1|}{R})^{-E}.
\end{equation}
We split the analysis into three cases. We will need $F\ge 2E+2$.
\bigskip

$(a)\;\;|y_2|\le R.$ In this case
$$\phi_1(x_1)\lesssim \frac1{R^{1/2}}(1+\frac{|x_1|}{R^{1/2}})^{-E}.$$Using \eqref{ryyy878t0934=0=4-56k90u=-56i90-870i=81} with $n=1$ twice (first $R'=R^{1/2}$ then $R'=R$) and \eqref{ryyy878t0934=0=4-56k90u=-56i90-870i=82} with $E_1=E, E_2=2$ we get
$$\int_{|y_2|\lesssim R} (\phi_1*(\frac1{R}I_R))(y_1-x_1)w_{B_R,F}(y)dy\lesssim $$$$\int \frac1R(1+\frac{|x_1-y_1|}{R})^{-E}(1+\frac{|y_1|}{R})^{-E}dy_1\int(1+\frac{|y_2|}{R})^{-2}dy_2\lesssim $$$$ R(1+\frac{|x_1|}{R})^{-E},$$
as needed.

\bigskip

$(b)\;\;|y_2|\sim KR$, with $K\in [1,R^{1/2}]\cap 2^{\N}$. In this case
$$\phi_1(x_1)\lesssim \frac1{R^{1/2}}(1+\frac{|x_1|}{KR^{1/2}})^{-E}, $$and using \eqref{ryyy878t0934=0=4-56k90u=-56i90-870i=81} twice (first $R'=KR^{1/2}$ then $R'=R$) and \eqref{ryyy878t0934=0=4-56k90u=-56i90-870i=82} with $E_1=E, E_2=3$ we write
$$\int_{|y_2|\sim KR} (\phi_1*(\frac1{R}I_R))(y_1-x_1)w_{B_R,F}(y)dy\lesssim $$$$K\int \frac1R(1+\frac{|x_1-y_1|}{R})^{-E}(1+\frac{|y_1|}{R})^{-E}dy_1\int_{|y_2|\sim KR}(1+\frac{|y_2|}{R})^{-3}dy_2\lesssim $$$$ K(1+\frac{|x_1|}{R})^{-E}\frac{KR}{K^3}=\frac{R}{K}(1+\frac{|x_1|}{R})^{-E}.$$
Note that summing over $K\in [1,R^{1/2}]\cap 2^{\N}$ leads to the desired estimate \eqref{ryyy878t0934=0=4-56k90u=-56i90-870i=85}.

\bigskip

$(c)\;\;|y_2|\sim KR^{3/2}$, with $K\in [1,\infty)\cap 2^{\N}$. In this case
$$\phi_1(x_1)\lesssim \frac1{R^{1/2}}(1+\frac{|x_1|}{KR})^{-E}, $$
and so, by \eqref{ryyy878t0934=0=4-56k90u=-56i90-870i=81} we have
$$(\phi_1*(\frac1{R}I_R))(y_1-x_1)\lesssim \frac1{R^{1/2}}(1+\frac{|y_1-x_1|}{KR})^{-E}.$$
Next, combining this with  \eqref{ryyy878t0934=0=4-56k90u=-56i90-870i=82} ($E_1=E$, $E_2=E+2$) and then with \eqref{ryyy878t0934=0=4-56k90u=-56i90-870i=81} we get
$$\int_{|y_2|\sim KR^{3/2}} (\phi_1*(\frac1{R}I_R))(y_1-x_1)w_{B_R,F}(y)dy\lesssim $$
$$R^{1/2}\int (1+\frac{|y_1-x_1|}{KR})^{-E}\frac1R(1+\frac{|y_1|}{R})^{-E}dy_1\int_{|y_2|\sim KR^{3/2}}(1+\frac{|y_2|}{R})^{-E-2}dy_2\lesssim$$
$$R^{1/2}(1+\frac{|x_1|}{KR})^{-E}\frac{KR^{3/2}}{(KR^{1/2})^{E+2}}\lesssim R^{1/2}(1+\frac{|x_1|}{KR})^{-E}\frac{KR^{3/2}}{K^{E+2}R}\le$$$$ \frac{R}{K}(1+\frac{|x_1|}{R})^{-E}.$$
Note that summing over $K\in [1,\infty)\cap 2^{\N}$ leads to the desired estimate \eqref{ryyy878t0934=0=4-56k90u=-56i90-870i=85}.

\end{proof}

\section{$L^2$ decoupling}
\label{L2}
We will use Lemma \ref{nl9} to prove a very simple but efficient decoupling.
This exploits $L^2$ orthogonality and will allow us to decouple to the smallest possible scale, equal to the inverse of the radius of the cube. This process is illustrated by the following simple result.
\begin{proposition}[$L^2$ \textbf{decoupling}]
\label{nl3}
Let  $Q$ be a cube with $l(Q)\ge R^{-1}$. Then for each cube $B_R\subset\R^n$ with side length $R$ we have
$$\|E_{Q}g\|_{L^2(w_{B_R})} \lesssim (\sum_{q \in\Di_{\frac1R}(Q)}\|E_{q}g\|_{L^2(w_{B_R})}^2)^{1/2}.$$
\end{proposition}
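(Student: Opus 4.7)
The natural tool is Lemma \ref{nl9}, applied with
$$O_1(v) := \|E_Q g\|_{L^2(v)}^2, \qquad O_2(v) := \sum_{q \in \Di_{1/R}(Q)} \|E_q g\|_{L^2(v)}^2.$$
By Remark \ref{aha}, both operators satisfy (W2)--(W4) (with the exponent $p=2$). So my plan is to verify the hypothesis (W1), namely
$$\|E_Q g\|_{L^2(B)}^2 \lesssim \sum_{q \in \Di_{1/R}(Q)} \|E_q g\|_{L^2(w_{B,E})}^2$$
for each cube $B$ of side length $R$; the conclusion of the proposition then follows directly.

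To prove (W1), I will introduce a smooth envelope. Fix a nonnegative Schwartz function $\phi$ on $\R^n$ with $1_{B(0,1)} \le \phi \lesssim w_{B(0,1),E}$ and such that $\widehat{\phi^{1/2}}$ is supported in $B(0,1)$ (this is a standard construction: pick an even Schwartz $\theta$ with $\widehat{\theta}$ supported in $B(0,1/2)$ and non-vanishing on $B(0,1)$, and set $\phi^{1/2}=c\,\theta$ for a large constant). Rescaling to $B = B(c_B,R)$ gives $\phi_B^{1/2}$ with Fourier transform supported in $B(0,C/R)$. Then
$$\|E_Q g\|_{L^2(B)}^2 \;\le\; \|\phi_B^{1/2}\, E_Q g\|_{L^2(\R^n)}^2,$$
and I decompose $\phi_B^{1/2} E_Q g = \sum_{q} \phi_B^{1/2} E_q g$. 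Each summand has Fourier transform supported in the $(C/R)$-neighborhood of the portion of the paraboloid lying above $q$.

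The geometric fact that makes everything work is that these Fourier supports have bounded overlap: a point $(\xi',\xi_n)\in\R^n$ lies in the $(C/R)$-neighborhood of the paraboloid piece above $q$ only if $\xi'$ is within $C/R$ of $q$ in $\R^{n-1}$, and since the cubes $q$ form a partition at scale $1/R$, the projection $\xi'$ belongs to $O(1)$ enlarged cubes. Therefore, by Plancherel and almost-orthogonality,
$$\|\phi_B^{1/2} E_Q g\|_{L^2}^2 \;\lesssim\; \sum_{q} \|\phi_B^{1/2} E_q g\|_{L^2}^2 \;=\; \sum_{q} \|E_q g\|_{L^2(\phi_B)}^2 \;\lesssim\; \sum_{q} \|E_q g\|_{L^2(w_{B,E})}^2,$$
where the last step uses $\phi_B \lesssim w_{B,E}$. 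This establishes (W1), and Lemma \ref{nl9} then upgrades $1_B$ to $w_{B_R,E}$ on the left-hand side, which is exactly what we need since $O_1$ and $O_2$ are squared $L^2$ norms and one can take square roots at the end.

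The main (mild) obstacle is verifying the bounded overlap rigorously and producing the envelope $\phi$ with both the pointwise lower bound $\phi \ge 1_{B(0,1)}$ and compactly supported $\widehat{\phi^{1/2}}$; the former is crucial so that $\|E_Q g\|_{L^2(B)}^2$ is dominated by $\|\phi_B^{1/2}E_Q g\|_{L^2}^2$, while the latter makes the almost-orthogonality available at scale exactly $1/R$, matching the cells of the partition $\Di_{1/R}(Q)$.
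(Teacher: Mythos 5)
Your argument is correct and is essentially the same as the paper's: reduce to a local estimate via Lemma \ref{nl9}, smoothly truncate with an envelope $\phi$ (the paper's $\eta$) whose square root has compactly supported Fourier transform, and conclude by Plancherel using the bounded overlap of the Fourier supports of the pieces $\phi_B^{1/2} E_q g$. The only cosmetic difference is that the paper phrases the overlap as pairwise disjointness for non-adjacent $q$ and records the weight $\eta_{B'}$ on the right before invoking the remark following Lemma \ref{nl9}, whereas you carry $w_{B,E}$ explicitly, but these are the same proof.
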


\begin{proof}
We will prove  that
\begin{equation}
\label{ne89}
\|E_{Q}g\|_{L^2(w_{B_R})}^2\lesssim \sum_{q\in\Di_{\frac1R}(Q)}\|E_{q}g\|_{L^2(w_{B_R})}^2.
\end{equation}
Fix   a positive Schwartz function $\eta$ such that the Fourier transform of $\sqrt{\eta}$ is supported in a small neighborhood of the origin, and such that $\eta\ge 1$ on $B(0,1)$.
By invoking Lemma \ref{nl9} we see that inequality \eqref{ne89} will follow once we check that
\begin{equation}
\label{ne90}
\|E_{Q}g\|_{L^2({B'})}^2\lesssim \sum_{q\in\Di_{\frac1R}(Q)}\|E_{q}g\|_{L^2(\eta_{B'})}^2
\end{equation}
holds true for each cube $B'$ with  $l(B')=R$.

Note that the Fourier transform of $\sqrt{\eta_{B'}}E_{q}g$ will be supported inside the $R^{-1}-$neighborhood of the paraboloid above $q$, and that these neighborhoods are pairwise disjoint for two non adjacent $q$.
Since
$$\|E_{Q}g\|_{L^2({B'})}^2\lesssim \|E_{Q}g\|_{L^2(\eta_{B'})}^2=\|\sqrt{\eta_{B'}}E_{Q}g\|_{L^2(\R^n)}^2,$$
\eqref{ne90} will now immediately follow from the $L^2$ orthogonality of the functions $\sqrt{\eta_{B'}}E_{q}g$.

\end{proof}

\section{Parabolic rescaling}\label{s5}
\bigskip

A nice property of the paraboloid $\W^{n-1}$ is the fact that each square-like cap on it can be stretched to the whole $\W^{n-1}$ via an affine transformation. Affine transformations interact well with the Fourier transform, and this facilitates a natural passage from the operator $E_Q$ to $E_{[0,1]^{n-1}}$.

\begin{proposition}
\label{propo:parabooorescal}
Let $0<\delta\le \sigma<1$ and $p\ge 2$. For each  cube $Q\subset [0,1]^{n-1}$ with $l(Q)=\sigma^{\frac12}$ and each cube $B\subset \R^n$ with $l(B)\ge \delta^{-1}$ we have
$$\|E_{Q}g\|_{L^p(w_B)}\lesssim \D_p(\frac\delta\sigma)(\sum_{q\in\Di_{\delta^{1/2}}(Q)}\|E_qg\|_{L^p(w_B)}^2)^{1/2}.$$
The implicit constant is independent of $\delta,\sigma,Q,B$.
\end{proposition}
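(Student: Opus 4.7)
The plan is to exploit the symmetry of the paraboloid under parabolic rescalings to reduce the problem over the cap $Q$ to the already-quantified decoupling problem over $[0,1]^{n-1}$ at scale $\delta/\sigma$. Writing $Q = a + [0,\sigma^{1/2}]^{n-1}$, I would substitute $\xi = a + \sigma^{1/2}\eta$ in the integral defining $E_Q g$. A direct expansion of the phase $\xi\cdot(x_1,\dots,x_{n-1}) + |\xi|^2 x_n$ isolates a modulation that depends only on $a$ and $x$, a cross term that is absorbed into a shear of the spatial variables, and a leftover $\sigma(\eta_1^2+\dots+\eta_{n-1}^2)x_n$. This identifies
\[|E_Q g(x)| = \sigma^{(n-1)/2}\,|E\tilde g(L_a x)|,\qquad \tilde g(\eta):=g(a+\sigma^{1/2}\eta),\]
where $L_a$ is the invertible affine map $L_a(x) = (\sigma^{1/2}(x_i + 2a_i x_n))_{i<n}\oplus \sigma x_n$. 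The same identity applied to the subcubes $q\in\Di_{\delta^{1/2}}(Q)$ identifies them, via the frequency change of variables, with the cubes $\tilde q\in\Di_{(\delta/\sigma)^{1/2}}([0,1]^{n-1})$.

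Next I would change variables $y=L_a x$ in every weighted $L^p$ integral that appears; the Jacobian $\sigma^{(n+1)/2}$ is the same on both sides, so it cancels, and the inequality we must prove becomes
\[\|E\tilde g\|_{L^p(\tilde w)} \lesssim \D_p(\delta/\sigma)\Bigl(\sum_{\tilde q\in\Di_{(\delta/\sigma)^{1/2}}([0,1]^{n-1})} \|E_{\tilde q}\tilde g\|_{L^p(\tilde w)}^2\Bigr)^{1/2},\]
where $\tilde w(y) = w_B(L_a^{-1}y)$. The set $L_a(B)$ is a parallelepiped; crucially, its shortest side length equals $\sigma l(B)\ge \sigma/\delta = (\delta/\sigma)^{-1}$, so it can be tiled by cubes $B^*$ of side length exactly $(\delta/\sigma)^{-1}$ on which the definition of $\D_p(\delta/\sigma)$ applies.

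I would then finish by partitioning $\R^n$ into such cubes $B^*$, invoking the defining inequality for $\D_p(\delta/\sigma)$ on each $B^*$ (with its weight $w_{B^*}$) and summing, the right-hand side being handled by Minkowski's inequality in $\ell^{p/2}$ exactly as in Remark \ref{aha}. Matching the tiled sum of $w_{B^*}$ to the pulled-back weight $\tilde w$ is the role of the basic covering inequality \eqref{jgiou tiguyt reym89r--9048-0023=-1r90-8=4-293=-0}: since $L_a^{-1}$ stretches the short direction of $B^*$ to length $\sigma^{-1}(\delta/\sigma)^{-1} = \delta^{-1}\le l(B)$, each pullback $L_a^{-1}(B^*)$ still fits inside $B$ or its neighbours, which is exactly what is needed to bound $\sum_{B^*} w_{B^*}(L_a y)$ by $C\tilde w(y)$.

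The main obstacle I anticipate is keeping the weights honest under the anisotropic linear map $L_a$: $L_a(B)$ is not a cube, so the decoupling hypothesis cannot be applied verbatim, and the weight $\tilde w$ does not match the isotropic $w_{B^*}$. This is entirely a bookkeeping issue, resolved by the tiling described above together with a direct pointwise comparison $\tilde w(y) \lesssim \sum_{B^*\subset L_a(B)} w_{B^*}(y) + \text{rapidly decaying tail}$, the tail absorbed using the decay of $w_{B,E}$ and (when needed) an application of Lemma \ref{nl9} to retain the weight exponent $E$ on both sides of the final inequality.
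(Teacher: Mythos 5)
Your proposal is correct and takes essentially the same approach as the paper: parabolic rescaling via the affine map $L_a$ (the paper's $T_Q$), tiling the image parallelepiped by cubes of side $(\delta/\sigma)^{-1}$, applying the defining inequality for $\D_p(\delta/\sigma)$ on each tile, aggregating via Minkowski, and using the covering inequality \eqref{jgiou tiguyt reym89r--9048-0023=-1r90-8=4-293=-0} together with Lemma \ref{nl9} (cf.\ Remark \ref{aha}) for the weight bookkeeping. The only cosmetic difference is that the paper invokes Lemma \ref{nl9} at the outset to replace the left-hand weight by the indicator $1_B$ (so that the image under $T_Q$ is a compact parallelepiped covered by finitely many cubes) and treats the case $l(B)=\delta^{-1}$ before extending; you instead carry the full weight through the change of variables and absorb tails at the end, which is equivalent in substance.
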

\begin{proof}
Let us first assume $l(B)= \delta^{-1}$.
We will apply Lemma \ref{nl9} to
$$O_1(v)=\|E_{Q}g\|_{L^p(v)}^p$$
$$O_2(v)=(\sum_{q\in\Di_{\delta^{1/2}}(Q)}\|E_qg\|_{L^p(v)}^2)^{\frac{p}{2}},$$
cf. Remark \ref{aha}.
It thus suffices to prove that
$$\|E_{Q}g\|_{L^p(B)}\lesssim \D_p(\frac\delta\sigma)(\sum_{q\in\Di_{\delta^{1/2}}(Q)}\|E_qg\|_{L^p(w_B)}^2)^{1/2}.$$
Assume $Q=a+[0,\sigma^{1/2}]^{n-1}$ with $a=(a_1,\ldots,a_{n-1})$. We will perform a parabolic rescaling via the affine transformation $L=L_Q$
$$L_Q(\xi_1,\ldots,\xi_{n-1})=(\xi_1',\ldots,\xi_{n-1}')=(\frac{\xi_1-a_1}{\sigma^{1/2}},\ldots,\frac{\xi_{n-1}-a_{n-1}}{\sigma^{1/2}} ).$$
A simple computation shows that for each cube $\tilde{Q}$ we have
$$|E_{\tilde{Q}}g(x_1,\ldots,x_{n-1},x_n)|=\sigma^{\frac{n-1}{2}}|E_{\tilde{Q}_L}g_L((x_1+2a_1x_n)\sigma^{1/2},\ldots,(x_{n-1}+2a_{n-1}x_n)\sigma^{1/2},x_n\sigma )|,$$
where  $\tilde{Q}_L=L(\tilde{Q})$, $g_L=g\circ L$. The image $S$ of $B$ under the affine transformation $T=T_Q$
$$T_Q(x_1,\ldots,x_{n-1},x_n)=((x_1+2a_1x_n)\sigma^{1/2},\ldots,(x_{n-1}+2a_{n-1}x_n)\sigma^{1/2},x_n\sigma )$$
can be covered with a family $\F$ of pairwise disjoint cubes $\Delta$ with side length $\delta^{-1}\sigma$,  such that we have the following double inequality, in the same spirit as \eqref{jgiou tiguyt reym89r--9048-0023=-1r90-8=4-293=-0}
\begin{equation}
\label{iodferuigyiurtuygrtiohhutoi}
1_S(x)\lesssim \sum_{B'\in\F}w_{B'}(x)\lesssim w_B(T^{-1}x).
\end{equation}
The second inequality is very easy to guarantee for a proper covering, as $l(B')\le l(B)$.
After a change of variables on the spatial side we get (since $Q_L=[0,1]^{n-1}$)
$$\|E_{Q}g\|_{L^p(B)}=\sigma^{\frac{n-1}{2}}\sigma^{-\frac{n+1}{2p}}\|Eg_L\|_{L^p(S)}\lesssim$$
$$\sigma^{\frac{n-1}{2}-\frac{n+1}{2p}}(\sum_{B'\in\F}\|Eg_L\|_{L^p(w_{B'})}^p)^{1/p}\le$$
$$\sigma^{\frac{n-1}{2}-\frac{n+1}{2p}}\D_p(\frac\delta\sigma)[\sum_{B'\in\F}(\sum_{q'\in\Di_{(\frac{\delta}{\sigma})^{1/2}}([0,1]^{n-1})}\|E_{q'}g_L\|_{L^p(w_{B'})}^2)^{\frac{p}{2}}]^{1/p}=$$
$$\sigma^{\frac{n-1}{2}-\frac{n+1}{2p}}\D_p(\frac\delta\sigma)[\sum_{B'\in\F}(\sum_{q\in\Di_{\delta^{1/2}}(Q)}\|E_{q_L}g_L\|_{L^p(w_{B'})}^2)^{\frac{p}{2}}]^{1/p}.$$
Using Minkowski's inequality followed by \eqref{iodferuigyiurtuygrtiohhutoi}, this is dominated by
$$\sigma^{\frac{n-1}{2}-\frac{n+1}{2p}}\D_p(\frac\delta\sigma)(\sum_{q\in\Di_{\delta^{1/2}}(Q)}\|E_{q_L}g_L\|_{L^p(\sum w_{B'})}^2)^{\frac{1}{2}}\lesssim$$
$$\sigma^{\frac{n-1}{2}-\frac{n+1}{2p}}\D_p(\frac\delta\sigma)(\sum_{q\in\Di_{\delta^{1/2}}(Q)}\|E_{q_L}g_L\|_{L^p(w_B\circ T^{-1})}^2)^{\frac{1}{2}}.$$
By changing back to the original variables, this is easily seen to be the same as
$$\D_p(\frac\delta\sigma)(\sum_{q\in\Di_{\delta^{1/2}}(Q)}\|E_qg\|_{L^p(w_B)}^2)^{1/2}.$$
This finishes the proof in the case $l(B)= \delta^{-1}$.
\medskip

Let us next assume $l(B)\ge \delta^{-1}$. By invoking again Lemma \ref{nl9} (cf. Remark \ref{aha}), it suffices to prove
$$\|E_{Q}g\|_{L^p(B)}\lesssim \D_p(\frac\delta\sigma)(\sum_{q\in\Di_{\delta^{1/2}}(Q)}\|E_qg\|_{L^p(w_B)}^2)^{1/2}.$$

Using \eqref{jgiou tiguyt reym89r--9048-0023=-1r90-8=4-293=-0} and Minkowski's inequality, we may close the argument as follows
$$\|E_{Q}g\|_{L^p(B)}\lesssim (\sum_{\Delta\in\Di_{\delta^{-1}}(B)}\|E_{Q}g\|_{L^p(w_\Delta)}^p)^{1/p}$$
$$\lesssim \D_p(\frac\delta\sigma)(\sum_{\Delta\in\Di_{\delta^{-1}}(B)}(\sum_{q\in\Di_{\delta^{1/2}}(Q)}\|E_qg\|_{L^p(w_\Delta)}^2)^{p/2})^{1/p}.$$
$$\lesssim \D_p(\frac\delta\sigma)(\sum_{q\in\Di_{\delta^{1/2}}(Q)}\|E_qg\|_{L^p(w_B)}^2)^{1/2}.$$

\end{proof}

\section{Linear versus multilinear decoupling}

\label{s10}
Let $\pi:\W^{n-1}\to [0,1]^{n-1}$ be the projection map.
\begin{definition}
We say that the cubes  $Q_1,\ldots,Q_n\subset [0,1]^{n-1}$ are $\nu$-transverse if the volume of the parallelepiped spanned by unit normals $n(P_i)$  is greater than $\nu$, for each choice of $P_i\in \W^{n-1}$ with $\pi(P_i)\in Q_i$.
\end{definition}
For $E\ge 100n$, $2\le p\le \infty$, $m\in \N$ and $0<\nu<1$ we let $\D(\delta,p,\nu,m,E)=\D_n(\delta,p,\nu,m,E)$ be the smallest constant such that the inequality
\begin{equation}
\label{ nbo wru45y9d0=1-=c0214=-`=2e4=23-4}
[\sum_{\Delta\in\Di_{\mu^{-1}}(B)}(\prod_{i=1}^n\|E_{Q_i}g\|_{L^p(w_{\Delta,10E})}^p)^{\frac{1}n}]^{\frac1p}\le
\end{equation}
$$
\D(\delta,p,\nu,m,E)\left[\prod_{i=1}^n\sum_{q_i\in\Di_{\delta^{1/2}}(Q_i)}\|E_{q_i}g\|_{L^p(w_{B,E})}^2\right]^{\frac1{2n}}$$
holds for each cube $B\subset \R^n$ with  $l(B)=\delta^{-1}$, each $g:[0,1]^{n-1}\to\C$ and for each $\nu$-transverse  cubes $Q_i$ with equal side lengths $\mu$ satisfying $\mu\ge \delta^{2^{-m}}$.  Recall that  $\Di_{\mu^{-1}}(B)$ is the partition of $B$ using cubes $\Delta$ with $l(\Delta)=\mu^{-1}$.
The lower bound we impose on the size of $\mu$ is a bit more severe than the minimal lower bound $\mu\ge \delta^{1/2}$ needed in order to make sense of the quantity $\Di_{\delta^{1/2}}(Q_i)$. This restriction can be ignored for now and should only be paid attention to in  the final argument from the last section, when dominating \eqref{ fu9ety8290-78-=1=23 -r02v9=--e322} by \eqref{ fu9ety8290-78-=1=23 -r02v9=--e32}
\smallskip

Note that we use $w_{\Delta,10E}$ rather than $w_{\Delta,E}$ in \eqref{ nbo wru45y9d0=1-=c0214=-`=2e4=23-4}. This is done for purely technical reasons, as explained in Remark \ref{aha1}.

\medskip

Since $|E_{Q_i}g|$ can be thought of as being essentially constant on each $\Delta$, the quantity
$$[\sum_{\Delta\in\Di_{\mu^{-1}}(B)}(\prod_{i=1}^n\|E_{Q_i}g\|_{L^p(w_{\Delta,10E})}^p)^{\frac{1}n}]^{\frac1p}$$
can be viewed as being comparable to
$$\||\prod_{i=1}^{n}E_{Q_i}g|^{\frac1n}\|_{L^p(w_{B,10E})}.$$
The former will be a preferred substitute for the latter due to purely technical reasons.

\bigskip

Several applications of  H\"older's inequality combined with \eqref{jgiou tiguyt reym89r--9048-0023=-1r90-8=4-293=-0} show that for each $\nu$, $m$
\begin{equation}
\label{hjgvhdgfyrtefuiyecuryueuyguirtygio}
\D(\delta,p,\nu,m,E)\lesssim \D(\delta,p,E).
\end{equation}
This inequality is too basic and will never be used. We will instead derive a stronger form of it in the last section, see \eqref{dschjgdshdcyfyuvtioduewuyt}, which dominates $\D(\delta,p,\nu,m,E)$ using a combination of powers of $\delta$ and some power of $\D(\delta,p)$.
\medskip

We will now prove and later use the following  approximate reverse inequality. Recall the definition of $\Gamma_n(E)$ from Theorem \ref{alternative}.
\begin{theorem}
\label{ch ft7wbtfgb6n17r782brym9,iqmivpk[l}
Let $E\ge 100n$.
Assume one of the following holds

(i) $n=2$

(ii) $n\ge 3$ and $\D_{n-1}(\delta,p,\Gamma_{n-1}(10E))\lesssim_{\epsilon,E} \delta^{-\epsilon}$.
\\
Then for each $0<\nu\le 1$ there is $\epsilon(\nu)=\epsilon(\nu,p, E)$ with $\lim_{\nu\to 0}\epsilon (\nu)=0$ and $C_{\nu,m}$ such that for each $m\ge 1$ we have
\begin{equation}\label{dbhcgvbdduiqwyfyueeeoiowuydrtuiwqo}
\D_n(R^{-1},p,E)\le C_{\nu,m} R^{\epsilon(\nu)}(1+\sup_{1\le R'\le  R}\D_n({R'}^{-1},p,\nu,m,E)),
\end{equation}
for each $R\gtrsim_{\nu,m}1$.
\end{theorem}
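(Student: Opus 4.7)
The plan is to run a Bourgain--Guth broad/narrow decomposition at an intermediate scale $K^{-1}$, with $K=K(\nu)$ large. The broad part is estimated directly by the multilinear constant $\D_n(R^{-1},p,\nu,m,E)$, while the narrow part is handled through Theorem~\ref{alternative} (which reduces it to an $(n-1)$-dimensional decoupling) followed by parabolic rescaling back to the linear constant at a smaller scale. Iterating the resulting recursion on the spatial radius yields \eqref{dbhcgvbdduiqwyfyueeeoiowuydrtuiwqo}.

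\smallskip

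Concretely, fix $K=K(\nu)$ and partition $[0,1]^{n-1}$ into cubes $\tau$ of side $K^{-1}$. For each $x\in\R^n$, either one can select $n$ of these caps $\tau_1,\ldots,\tau_n$ which are $\nu$-transverse and for which $|E_{\tau_i}g(x)|\gtrsim K^{-n}\max_\tau|E_\tau g(x)|$ (broad case), or every cap $\tau$ contributing comparably to $\max_\tau|E_\tau g(x)|$ sits inside a $CK^{-1}$-thickening of some hyperplane $H(x)\subset[0,1]^{n-1}$ (narrow case). This yields the pointwise inequality
\[
|Eg(x)|^p\lesssim K^{C}\max_{\nu\text{-transverse }\tau_1,\ldots,\tau_n}\Bigl(\prod_{i=1}^n|E_{\tau_i}g(x)|\Bigr)^{p/n}+\sum_{V}\Bigl|\sum_{\tau\in V}E_\tau g(x)\Bigr|^{p},
\]
where $V$ runs over the polynomially-in-$K$ many $CK^{-1}$-slabs in $[0,1]^{n-1}$. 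The broad term is matched to the defining inequality of $\D_n(R^{-1},p,\nu,m,E)$ with $\mu=K^{-1}$, using that $|E_{\tau_i}g|$ is essentially constant on cubes $\Delta\in\Di_{K}(B_R)$; the requirement $\mu\ge\delta^{2^{-m}}$ translates into $R\ge K^{2^m}$, which explains the qualifier $R\gtrsim_{\nu,m}1$.

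\smallskip

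For the narrow term, for each slab $V$ the function $\sum_{\tau\in V}E_\tau g$ has Fourier support inside a $CK^{-2}$-neighbourhood of a lower-dimensional paraboloid embedded in $\W^{n-1}$, obtained by slicing $\W^{n-1}$ by an affine hyperplane parallel to $H$. Applying Theorem~\ref{alternative} in dimension $n-1$ (the content of hypothesis (ii); vacuous for $n=2$, where a slab degenerates to a single cap $\tau$) gives
\[
\Bigl\|\sum_{\tau\in V}E_\tau g\Bigr\|_{L^p(w_{B_R,E})}\lesssim \D_{n-1}(K^{-2},p,\Gamma_{n-1}(10E))\Bigl(\sum_{\tau\in V}\|E_\tau g\|_{L^p(w_{B_R,E})}^2\Bigr)^{1/2},
\]
which by the induction hypothesis is $\lesssim_{\epsilon}K^{2\epsilon}(\sum_{\tau\in V}\cdots)^{1/2}$. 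Each factor $\|E_\tau g\|_{L^p(w_{B_R,E})}$ is then decoupled to scale $R^{-1/2}$ by Proposition~\ref{propo:parabooorescal}, producing the linear constant $\D_n(K^2/R,p,E)=\D_n((R/K^2)^{-1},p,E)$. Combining broad and narrow and summing over $V$ yields a recursion of the form
\[
\D_n(R^{-1},p,E)\le K^{C}\D_n(R^{-1},p,\nu,m,E)+C_{\nu,\epsilon}K^{2\epsilon}\,\D_n((R/K^{2})^{-1},p,E).
\]
Iterating $T\approx\log R/(2\log K)$ times until the recursive term reaches a bounded scale, the accumulated loss is $(C_{\nu,\epsilon}K^{2\epsilon})^{T}\le R^{\epsilon(\nu)}$, where $\epsilon(\nu)=\log(C_{\nu,\epsilon}K^{2\epsilon})/(2\log K)$ tends to $0$ as $\nu\to 0$ (taking first $\epsilon$ small, then $K=K(\nu)$ large), and the multilinear term is absorbed into $\sup_{R'\le R}\D_n((R')^{-1},p,\nu,m,E)$.

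\smallskip

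The main obstacle is the bookkeeping of the weight exponent $E$ through the three operations (broad/narrow pointwise inequality, lower-dimensional decoupling, parabolic rescaling). Theorem~\ref{alternative} forces a passage from $E$ to $\Gamma_{n-1}(10E)$ on the $(n-1)$-dimensional side, which is precisely why hypothesis (ii) is stated at the level $\Gamma_{n-1}(10E)$ rather than at $E$. One must verify that Proposition~\ref{propo:parabooorescal}, the $K^{-1}\to R^{-1/2}$ subdivision, and the combinatorial summation over slabs $V$ all propagate the weight exponent inside this permitted range; the factor $10E$ in the definition of $\D_n(\delta,p,\nu,m,E)$ is designed to provide exactly the room needed.
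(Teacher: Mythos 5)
Your overall plan (Bourgain--Guth broad/narrow decomposition at scale $K^{-1}=K(\nu)^{-1}$, broad term fed into the multilinear constant, narrow term handled by a lower-dimensional decoupling plus parabolic rescaling, then iteration) is the same as the paper's. However, there is a quantitative gap in how you handle the narrow part, and it breaks the iteration.

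Your pointwise inequality sums over \emph{all} slabs $V$: $|Eg(x)|^p\lesssim K^C\max(\cdots)+\sum_V|\sum_{\tau\subset V}E_\tau g(x)|^p$. There are polynomially many ($\approx K^{O(1)}$) such slabs, so even after the lower-dimensional decoupling (which only costs $K^{O(\epsilon)}$) and rescaling, summing over $V$ introduces an extra factor of $K^{O(1)}$ into the coefficient of the recursive term $\D_n((R/K^2)^{-1},p,E)$. Your recursion then has coefficient $C_{\nu,\epsilon}K^{2\epsilon}$ with $C_{\nu,\epsilon}\gtrsim K^{c}$ for some fixed $c=c(n)>0$. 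Iterating $T\approx\log R/(2\log K)$ times gives an accumulated loss $(K^{c+2\epsilon})^T\approx R^{c/2}$, a \emph{fixed} positive power of $R$, not $R^{\epsilon(\nu)}$. Equivalently, your claimed $\epsilon(\nu)=\log(C_{\nu,\epsilon}K^{2\epsilon})/(2\log K)\ge c/2+\epsilon$ does \emph{not} tend to $0$ as $\nu\to 0$. This defeats the purpose: the bound you get is no better than the trivial $\D_n(R^{-1})\lesssim R^{O(1)}$.

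The paper avoids this by exploiting that each $|E_\tau g|$ is essentially constant at spatial scale $K$: it tiles $B_R$ by cubes $B_K$ of side $K$ and, for \emph{each} $B_K$, selects a \emph{single} line/hyperplane $L(B_K)$ containing the significant caps. The narrow contribution is then $|\sum_{\alpha\subset S_{L(B_K)}}E_\alpha g(x)|$ for $x\in B_K$ --- no sum over slabs. After applying Theorem~\ref{alternative} in dimension $n-1$ this costs only $C_\epsilon K^{\epsilon}$, and upon summing over the $B_K$ (via Minkowski and \eqref{jgiou tiguyt reym89r--9048-0023=-1r90-8=4-293=-0}) the narrow term is controlled by $C_\epsilon K^{\epsilon}$ times the decoupled sums over $K^{-1}$- and $K^{-1/2}$-caps. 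Only the \emph{multilinear} term carries the unavoidable $K^C$, and crucially this factor is \emph{not} iterated (it appears once at each scale, multiplied by the accumulated $K^{\epsilon}$-losses, not raised to the $T$-th power). This is precisely Proposition~\ref{hcnyf7yt75ycn8u32r8907n580-9=--qc mvntvu5n8t} and its rescaled form Proposition~\ref{jcfn vrfwyt8981  =,i90t8-kc0r=90,it90}, which are iterated on the \emph{frequency} cap size with $B_R$ fixed (rather than on the spatial radius). To repair your argument you need to make the choice of slab depend on the spatial cube $B_K$ rather than replace the implicit maximum over slabs by a sum, so that the narrow part loses only $K^{O(\epsilon)}$ per iteration. (A minor point: the relevant lower-dimensional decoupling constant in the narrow step is $\D_{n-1}(K^{-1},p,\cdot)$, not $\D_{n-1}(K^{-2},p,\cdot)$, though both yield a $K^{O(\epsilon)}$ loss under the induction hypothesis.)
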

\medskip

\medskip

We next prove the case $n=3$ of the theorem and will then indicate the modifications needed for $n\ge 4$. The argument will also show how to deal with the case $n=2$.

\begin{remark}
\label{markre1dd5}
If $P_1,P_2,P_3\in \W^{2}$, the volume of the parallelepiped spanned by the  unit normals $n(P_i)$ is comparable to the area of the triangle with vertices $\pi(P_i)$.
\end{remark}

The key step in the proof of Theorem \ref{ch ft7wbtfgb6n17r782brym9,iqmivpk[l} for $n=3$ is the following result.

\begin{proposition}
\label{hcnyf7yt75ycn8u32r8907n580-9=--qc mvntvu5n8t}
 Assume $\D_2(\delta,p,\Gamma_2(10E))\lesssim_\epsilon {\delta}^{-\epsilon}$. Then there are constants $C, C_\epsilon$ such that for each  $m\ge 1$ and each $R\ge K^{2^m}$
$$\|Eg\|_{L^p(w_{B_R,E})}\le C_\epsilon K^{\epsilon}[(\sum_{\alpha\in\Di_{K^{-1}}([0,1]^2)}\|E_\alpha g\|_{L^p(w_{B_R,E)}}^2)^{1/2}+(\sum_{\beta\in\Di_{K^{-1/2}}([0,1]^2)}\|E_\beta g\|_{L^p(w_{B_R,E})}^2)^{1/2}]$$$$+K^{C}\D_3(R^{-1},p, K^{-2},m,E)(\sum_{\Delta\in\Di_{R^{-1/2}}([0,1]^2)}\|E_{\Delta}g\|_{L^p(w_{B_R,E})}^2)^{1/2}.$$
\end{proposition}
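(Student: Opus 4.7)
I would prove this by a Bourgain--Guth style dichotomy performed on each spatial cube of scale $K$ inside $B_R$. Partition $B_R$ into cubes $B_K$ of side $K$ (possible since $R \ge K^{2^m} \ge K$), and exploit the fact that for $\alpha \in \Di_{K^{-1}}([0,1]^2)$, the function $E_\alpha g$ is essentially constant at scale $K$; concretely, one replaces $|E_\alpha g(x)|$ on $B_K$ by the constant $c_\alpha := \|E_\alpha g\|_{L^p_\sharp(w_{B_K,10E})}$, paying only a rapidly decaying tail which is absorbed using \eqref{jgiou tiguyt reym89r--9048-0023=-1r90-8=4-293=-0} together with Lemma \ref{nl9} (this is why the weight exponent $10E$ appears in the definition \eqref{ nbo wru45y9d0=1-=c0214=-`=2e4=23-4}). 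Declare a cap $\alpha$ \textbf{significant} on $B_K$ if $c_\alpha \ge K^{-C_0} \max_{\alpha'} c_{\alpha'}$ for a sufficiently large $C_0$; insignificant caps contribute at most $K^{2-C_0}\max_{\alpha'} c_{\alpha'}$, which will be absorbed by the main terms.

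On each $B_K$, trichotomize. \textbf{Case A} (at most two significant caps): then $|Eg(x)| \lesssim \max_\alpha c_\alpha \le (\sum_\alpha c_\alpha^2)^{1/2}$, yielding the first term of the statement. \textbf{Case B} (three or more significant caps, no three of which are $K^{-2}$-transverse): by Remark \ref{markre1dd5}, transversality at level $K^{-2}$ on the paraboloid corresponds to triangle area $\gtrsim K^{-2}$ of cap centers in $[0,1]^2$, so in this case all significant caps lie in a $K^{-1}$-thickening of a line $\ell \subset [0,1]^2$. Then the sum of significant $E_\alpha g$ has Fourier support in the $K^{-1}$-neighborhood of a parabola (a lower-dimensional paraboloid sitting in $\R^3$); covering the strip by $O(K^{1/2})$ caps $\beta$ of side $K^{-1/2}$ and invoking Theorem \ref{alternative} at dimension $n=2$ (after an affine normalization), together with the hypothesis $\D_2(K^{-1},p,\Gamma_2(10E))\lesssim_\epsilon K^\epsilon$, produces $|Eg(x)| \lesssim K^\epsilon (\sum_\beta |E_\beta g|^2)^{1/2}$ in the weighted-$L^p$ sense on $B_K$, yielding the second term. \textbf{Case C} (three significant caps $\alpha_1,\alpha_2,\alpha_3$ which are $K^{-2}$-transverse): the transversality condition gives $\max_\alpha c_\alpha \lesssim K^{C_1}(c_{\alpha_1}c_{\alpha_2}c_{\alpha_3})^{1/3}$ for some absolute $C_1$, so $|Eg(x)| \lesssim K^{C_1+2}\prod_i |E_{\alpha_i}g(x)|^{1/3}$ on $B_K$.

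Finally, raise to the $p$-th power, integrate against $w_{B_R,E}$, and sum over $B_K \in \Di_K(B_R)$. Cases A and B pass through Minkowski's inequality in $\ell^{p/2}$ (as in Remark \ref{aha}) to give the two linear terms with at most a $K^\epsilon$ loss. Case C contributes exactly the quantity appearing on the left-hand side of the definition \eqref{ nbo wru45y9d0=1-=c0214=-`=2e4=23-4} of the multilinear constant $\D_3(R^{-1},p,K^{-2},m,E)$ with $\mu = K^{-1}$; applying that definition replaces the Case C contribution by $K^C \D_3(R^{-1},p,K^{-2},m,E)$ times the required sum over $R^{-1/2}$-caps $\Delta$, producing the third term. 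The main obstacle is the Case B step: one must correctly frame the strip-to-parabola reduction as a genuine 2D decoupling problem on $\W^1 \subset \R^3$ and carry the weight bookkeeping (from $w_{B_K,10E}$ on the "inner" step, through the alternative formulation of Theorem \ref{alternative} with threshold $\Gamma_2(10E)$, back to $w_{B_R,E}$ on the "outer" step); everything else is the standard Bourgain--Guth trichotomy together with the tail manipulations encapsulated by Lemma \ref{nl9}.
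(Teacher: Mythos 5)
Your proposal is correct and follows essentially the same Bourgain--Guth decomposition the paper uses: local constancy at scale $K$ via $c_\alpha(B_K)=\|E_\alpha g\|_{L^p_\sharp(w_{B_K,10E})}$, a notion of significant caps, a trichotomy by geometry (cluster / strip / transverse triple), 2D decoupling via Theorem \ref{alternative} for the collinear case, and summation over $B_K$ followed by Lemma \ref{nl9} to restore the weight. One caution on your Case C: the pointwise inequality $|Eg(x)|\lesssim K^{C_1+2}\prod_i|E_{\alpha_i}g(x)|^{1/3}$ is not valid (each $E_{\alpha_i}g$ can vanish at individual points of $B_K$); what you actually have is $|Eg(x)|\lesssim K^{C}\prod_i c_{\alpha_i}(B_K)^{1/3}$ for $x\in B_K$, and only after taking the $L^p(B_K)$ norm does this become $K^C\prod_i\|E_{\alpha_i}g\|_{L^p(w_{B_K,10E})}^{1/3}$, which is the quantity that sums over $B_K$ to the left side of \eqref{ nbo wru45y9d0=1-=c0214=-`=2e4=23-4} (after also taking a max over the $O(K^C)$ possible transverse triples). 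The paper carries the $c_{\alpha_i}$ notation precisely to avoid this pitfall; with that correction your sketch matches.
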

\begin{proof}
Using Lemma \ref{nl9} (cf. Remark \ref{aha}), it suffices to prove the inequality with the unweighted quantity $\|Eg\|_{L^p({B_R})}$ on the left hand side.
Cover $B_R$ with a family $\Di_K(B_R)$ of  cubes $B_K\subset \R^3$ with side length $K$.

Let $\psi:\R^3\to\C$ be a Schwartz function with Fourier transform equal to 1 on $B(0,10)$.
For each $\alpha\in \Di_{K^{-1}}([0,1]^2)$ define
$$c_\alpha(B_K)=(\frac1{|B_K|}\int |E_\alpha g|^pw_{B_K,10E})^{\frac1p}.$$
Note that since $E_\alpha g=(E_\alpha g)*\psi_{K}$ for an appropriate modulation/dilation $\psi_K$ of $\psi$, we have
$$\sup_{x\in B_K}|E_\alpha g(x)|\lesssim c_\alpha(B_K).$$

This is a manifestation of the Uncertainty Principle that asserts that $|E_\alpha g|$ is essentially constant at scale $K$.
Let $\alpha^*=\alpha^*(K)\in \Di_{K^{-1}}([0,1]^2)$ be a square that maximizes $c_\alpha(B_K)$.
Define also
$$S_{big}=\{\alpha: c_{\alpha}(B_K)\ge K^{-C}c_{\alpha^*}(B_K)\}.$$
The number $C$ will change its value from one line to the next one, but crucially, it will always be independent of $K$.

We will show that for each $B_K\in \Di_K(B_R)$ there exists a line $L=L(B_K)$ in the $(\xi_1,\xi_2)$ plane such that if
$$S_L=\{(\xi_1,\xi_2): \dist((\xi_1,\xi_2),L)\le \frac{C}{K}\}$$
then for $x\in B_K$
$$ |Eg(x)|\le $$
\begin{equation}
\label{term1.1}Cc_{\alpha^*}(B_K)+
\end{equation}\begin{equation}
\label{term1.2}
K^{4}\max_{\alpha_1,\alpha_2,\alpha_3\atop{K^{-2}-\text{transverse}}}(\prod_{i=1}^3c_{\alpha_i}(B_K))^{1/3}+
\end{equation}\begin{equation}
\label{term1.3}
|\sum_{\alpha\subset S_L}E_\alpha g(x)|.
\end{equation}
To see this, we distinguish three scenarios. First, if there is no  $\alpha\in S_{big}$ with  $\dist(\alpha,\alpha^*)\ge \frac{10}{K}$, then \eqref{term1.1} suffices, as
$$ |Eg(x)|\le \sum_{\alpha}c_\alpha(B_K)\le Cc_{\alpha^{*}}(B_K).$$
Otherwise, there is  $\alpha^{**}\in S_{big}$ with $\dist(\alpha^{**},\alpha^*)\ge \frac{10}{K}$.
The line $L$ is determined by the centers of $\alpha^1,\alpha^{2}$, which are chosen to be  furthest apart among
all possible pairs in $S_{big}$. Note that the distance between these centers is at least $\frac{10}{K}$.
\bigskip

Second, if there is $\alpha^3\in S_{big}$ such that  $\alpha^3$ intersects the complement of $S_L$  then  \eqref{term1.2} suffices. To see this, note first that $\alpha^3$ is forced to intersect the strip between $\alpha^1$ and $\alpha^2$ perpendicular to $L$. Thus, a triangle determined by any three points in $\alpha^i$ has area $\gg K^{-2}$. Combining this with  Remark \ref{markre1dd5} shows  that $\alpha^{1},\alpha^{2}$, $\alpha^{3}$ are $K^{-2}$ transverse, for $C$ large enough.
\bigskip

Third, if all $\alpha\in S_{big}$ are inside $S_L$,  the sum of \eqref{term1.1} and \eqref{term1.3} will obviously suffice.
\\
We now claim that \eqref{term1.1}-\eqref{term1.3} imply the following
$$\|Eg\|_{L^p(B_K)}$$$$\le C_\epsilon K^{\epsilon}[ (\sum_{\alpha\in\Di_{K^{-1}}([0,1]^2)}\|E_\alpha g\|_{L^p(w_{B_K,10E})}^2)^{1/2}+(\sum_{\beta\in\Di_{K^{-1/2}}([0,1]^2)}\|E_\beta g\|_{L^p(w_{B_K,10E})}^2)^{1/2}]$$
\begin{equation}
\label{uyfuyr7fgyrtyh0ytoujp[io[p]l[p}
+K^{C}\max_{\alpha_1,\alpha_2,\alpha_3\atop{K^{-2}-\text{transverse}}}(\prod_{i=1}^3\|E_{\alpha_i}g\|_{L^p(w_{B_K,10E})})^{1/3}.
\end{equation}
Only the third scenario above needs an explanation.   Cover $S_L$ by pairwise disjoint rectangles $U$ of dimensions $K^{-1}$ and ${K^{-1/2}}$, with the long side parallel to $L$. To simplify notation, assume the equation of $L$ is $\eta=1$ and that $B_K=[0,K]^3$. For each fixed $y$ the Fourier transform of $(x,z)\mapsto E_{S_L}g(x,y,z)$ is supported in the $O(K^{-1})$ neighborhood of the parabola $\eta=\xi^2+1$. Using Theorem \ref{alternative} and our hypothesis $\D_{2}(K^{-1},p,\Gamma_2(10E))\lesssim_\epsilon K^{\epsilon}$ we can write
\begin{equation}
\label{huregyerwiout8rtut9u56uy956iy89790}
\|E_{S_L} g(x,y,z)\|_{L^p_{x,z}([0,K]^2)}\lesssim \D_{2}(K^{-1},p,10E)(\sum_{U}\|E_{U}g(x,y,z)\|_{L^p_{x,z}(w_{[0,K]^2,10E})}^2)^{1/2}
\end{equation}
$$\lesssim_\epsilon\delta^{-\epsilon}(\sum_{U}\|E_{U}g(x,y,z)\|_{L^p_{x,z}(w_{[0,K]^2,10E})}^2)^{1/2}.$$

Next raise this inequality to the power $p$, integrate over $y\in [0,K]$ and use
\begin{equation}
\label{huregyerwiout8rtut9u56uy956iy89790rfguity65y956908yu9076}
w_{[0,K]^2,10E}(x,z)1_{[0,K]}(y)\lesssim w_{B_K,10E}(x,y,z)
\end{equation}
and
Minkowski's inequality to write
$$\|\sum_{\alpha:\alpha\subset S_L}E_\alpha g\|_{L^p({B_K})}\lesssim_\epsilon K^{\epsilon}(\sum_{U}\|E_{U}g\|_{L^p(w_{B_K,10E})}^2)^{1/2}.$$
Note however that since we are dealing with the third scenario,  the contribution of $E_{[0,1]^2\setminus S_L}g$ is small
$$\|E_{[0,1]^2\setminus S_L}g\|_{L^p(w_{B_K,10E})}\le \sum_{\alpha\not\in S_{big}}\|E_{\alpha}g\|_{L^p(w_{B_K,10E})}\le Cc_{\alpha^*}(B_K)|B_K|^{1/p}.$$

Using the triangle inequality we get
$$(\sum_{U}\|E_{U}g\|_{L^p(w_{B_K,10E})}^2)^{1/2}\le (\sum_{\beta\in\Di_{K^{-1/2}}([0,1]^2)}\|E_\beta g\|_{L^p(w_{B_K,10E})}^2)^{1/2}+C\|E_{\alpha^*}g\|_{L^p(w_{B_K,10E})}.$$
We conclude that \eqref{uyfuyr7fgyrtyh0ytoujp[io[p]l[p} holds under the third scenario. The first two scenarios are quite immediate.

Using $w_{B_K,10E}\le w_{B_K,E}$, \eqref{uyfuyr7fgyrtyh0ytoujp[io[p]l[p} further implies that
$$\|Eg\|_{L^p(B_K)}$$$$\le C_\epsilon K^{\epsilon}[ (\sum_{\alpha\in\Di_{K^{-1}}([0,1]^2)}\|E_\alpha g\|_{L^p(w_{B_K,E})}^2)^{1/2}+(\sum_{\beta\in\Di_{K^{-1/2}}([0,1]^2)}\|E_\beta g\|_{L^p(w_{B_K,E})}^2)^{1/2}]$$
\begin{equation}
\label{uyfuyr7fgyrtyh0ytoujp[io[p]l[p111}
+K^{C}\max_{\alpha_1,\alpha_2,\alpha_3\atop{K^{-2}-\text{transverse}}}(\prod_{i=1}^3\|E_{\alpha_i}g\|_{L^p(w_{B_K,10E})})^{1/3}.
\end{equation}
Finally, we  raise \eqref{uyfuyr7fgyrtyh0ytoujp[io[p]l[p111} to the power $p$ and sum over all $B_K\in\Di_K(B_R)$, invoking Minkowski's inequality and \eqref{jgiou tiguyt reym89r--9048-0023=-1r90-8=4-293=-0} to get
$$\|Eg\|_{L^p({B_R})}\le $$$$C_\epsilon K^{\epsilon}[(\sum_{\alpha\in\Di_{K^{-1}}([0,1]^2)}\|E_\alpha g\|_{L^p(w_{B_R,E)}}^2)^{1/2}+(\sum_{\beta\in\Di_{K^{-1/2}}([0,1]^2)}\|E_\beta g\|_{L^p(w_{B_R,E})}^2)^{1/2}]$$$$+K^{C}\D_3(R^{-1},p, K^{-2},m,E)(\sum_{\Delta\in\Di_{R^{-1/2}}([0,1]^2)}\|E_{\Delta}g\|_{L^p(w_{B_R,E})}^2)^{1/2}.$$
An application of Lemma \ref{nl9} finishes the proof.
\end{proof}

Parabolic rescaling as in the proof of Proposition \ref{propo:parabooorescal} leads to the following. The details are left to the reader.

\begin{proposition}
\label{jcfn vrfwyt8981  =,i90t8-kc0r=90,it90}
Let $\tau\subset [0,1]^2$ be a square with side length $\delta\ge R^{-1/2}K^{2^{m-1}}$. Assume $$\D_2(\delta',p,\Gamma_2(10E))\lesssim_\epsilon {\delta'}^{-\epsilon}$$
for  all $\delta'<1$. Then if $R\ge K^{2^m}$ we have
$$\|E_{\tau}g\|_{L^p(w_{B_R,E})}\le C_\epsilon K^{\epsilon}[(\sum_{\alpha\in\Di_{\delta K^{-1}}(\tau)}\|E_\alpha g\|_{L^p(w_{B_R,E)}}^2)^{1/2}+(\sum_{\beta\in\Di_{\delta K^{-1/2}}(\tau)}\|E_\beta g\|_{L^p(w_{B_R,E})}^2)^{1/2}]$$$$+K^{C}\D_3((R\delta^2)^{-1},p, K^{-2},m,E)(\sum_{\Delta\in\Di_{R^{-1/2}}(\tau)}\|E_{\Delta}g\|_{L^p(w_{B_R,E})}^2)^{1/2}.$$
The constants $C_\epsilon,C$ are independent of $\delta,R,\tau,K$.
\end{proposition}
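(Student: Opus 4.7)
The plan is to reduce this proposition to Proposition \ref{hcnyf7yt75ycn8u32r8907n580-9=--qc mvntvu5n8t} via the parabolic rescaling already used in the proof of Proposition \ref{propo:parabooorescal}. Writing $\tau = a+[0,\delta]^2$ with $a=(a_1,a_2)$, I introduce the frequency-side affine map
$$L_\tau(\xi_1,\xi_2) = \left(\frac{\xi_1-a_1}{\delta},\frac{\xi_2-a_2}{\delta}\right)$$
and its spatial dual
$$T_\tau(x_1,x_2,x_3) = \bigl((x_1+2a_1 x_3)\delta,\ (x_2+2a_2 x_3)\delta,\ x_3\delta^2\bigr).$$
The standard computation (as performed in the proof of Proposition \ref{propo:parabooorescal} with $\sigma = \delta^2$) then yields $|E_q g(x)| = \delta^2 |E_{L_\tau(q)} g_L(T_\tau x)|$ for every subcube $q\subset \tau$, where $g_L = g\circ L_\tau^{-1}$ is supported on $L_\tau(\tau) = [0,1]^2$.

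Next, the image $T_\tau(B_R)$ is a parallelepiped of dimensions $\sim R\delta\times R\delta\times R\delta^2$, which can be covered by a family $\F$ of finitely overlapping cubes $B'$ with $l(B') = R' := R\delta^2$ satisfying the analogue of \eqref{iodferuigyiurtuygrtiohhutoi}, namely
$$1_{T_\tau(B_R)}\ \lesssim\ \sum_{B'\in\F} w_{B',E}\ \lesssim\ w_{B_R,E}\circ T_\tau^{-1}.$$
The hypothesis $\delta \ge R^{-1/2}K^{2^{m-1}}$ is precisely equivalent to $R' \ge K^{2^m}$, so Proposition \ref{hcnyf7yt75ycn8u32r8907n580-9=--qc mvntvu5n8t}, together with the standing assumption on $\D_2$, applies to $g_L$ on each such $B'$.

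I then apply that proposition with $(g,R)$ replaced by $(g_L, R')$ on each $B'\in\F$. The frequency cubes at scales $K^{-1}$, $K^{-1/2}$ and $(R')^{-1/2} = R^{-1/2}\delta^{-1}$ appearing in the conclusion pull back under $L_\tau^{-1}$ (which dilates by $\delta$) to cubes of sides $\delta K^{-1}$, $\delta K^{-1/2}$ and $R^{-1/2}$ in $\tau$, matching exactly the three scales in the desired inequality; and the multilinear constant that appears is $\D_3((R')^{-1},p,K^{-2},m,E) = \D_3((R\delta^2)^{-1},p,K^{-2},m,E)$, as claimed. I raise each such estimate to the power $p$, sum over $B'\in\F$ using Minkowski's inequality in $l^{p/2}$ (as in Remark \ref{aha}), and then change variables back via $T_\tau^{-1}$ to recover the left-hand side $\|E_\tau g\|_{L^p(w_{B_R,E})}$.

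The only mild obstacle is the bookkeeping of the weights through the non-isotropic map $T_\tau$: one must check that $\sum_{B'\in\F} w_{B',E}\circ T_\tau$ is controlled by $w_{B_R,E}$ with a constant independent of $\delta,R,\tau,K$. But since $T_\tau$ factors as the composition of a shear (in the $(x_1,x_2)$ variables, controlled by $|a_i|\le 1$) and a diagonal dilation, and since $\F$ is chosen to efficiently tile the parallelepiped $T_\tau(B_R)$, this transfer is harmless and proceeds exactly as in the last paragraph of the proof of Proposition \ref{propo:parabooorescal}, producing only an $E$- and $n$-dependent constant.
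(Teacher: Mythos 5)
Your proof is correct and is exactly the argument the paper intends --- the paper's own ``proof'' consists only of the remark ``Parabolic rescaling as in the proof of Proposition~\ref{propo:parabooorescal} leads to the following. The details are left to the reader.'' You have filled in precisely those details, with the right identifications $\sigma=\delta^2$ and $R'=R\delta^2$ (so that $\delta\ge R^{-1/2}K^{2^{m-1}}$ is equivalent to the required $R'\ge K^{2^m}$), the correct pullback of the three frequency scales under $L_\tau^{-1}$, and the weight transfer via the analogue of \eqref{iodferuigyiurtuygrtiohhutoi}.
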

\bigskip

We are now ready to prove Theorem \ref{ch ft7wbtfgb6n17r782brym9,iqmivpk[l} for $n=3$. Let $K=\nu^{-1/2}$. Let also $R\ge K^{2^m}=\nu^{-2^{m-1}}$. Iterate Proposition \ref{jcfn vrfwyt8981  =,i90t8-kc0r=90,it90} starting with scale $\delta=1$ until we reach scale $\delta=R^{-1/2}K^{2^{m-1}}$.
Each iteration  lowers the scale of the square from $\delta$ to at least $\frac{\delta}{K^{1/2}}$. Thus we have to iterate $O(\log_K R)$ times. We use the following immediate consequence of H\"older's inequality
\begin{equation}
\label{ycvy7vr62c906787-`=xiz=907-r89763489--=2}
(\sum_{\beta\in\Di_{R^{-1/2}K^{2^{m-1}}}([0,1]^2)}\|E_{\beta}g\|_{L^p(w_{B_R,E})}^2)^{1/2}\lesssim K^{O(1)}(\sum_{\Delta\in\Di_{R^{-1/2}}([0,1]^2)}\|E_{\Delta}g\|_{L^p(w_{B_R,E})}^2)^{1/2}.
\end{equation}
Since $$\D_3((\delta^2R)^{-1},p,\nu,m,E)\le\sup_{1\le R'\le  R}\D_3({R'}^{-1},p,\nu,m,E)$$ we get
$$\|E_{[0,1]^2}g\|_{L^p(w_{B_R,E})}\le $$$$(CC_\epsilon K^\epsilon)^{O(\log_KR)} K^{O(1)}(1+\sup_{1\le R'\le  R}\D_3({R'}^{-1},p,\nu,m,E))(\sum_{\Delta\in\Di_{R^{-1/2}}([0,1]^2)}\|E_{\Delta}g\|_{L^p(w_{B_R,E})}^2)^{1/2}$$
$$=R^{-O(1)\log_{\nu}(CC_\epsilon)+\epsilon}\nu^{-O(1)}(1+\sup_{1\le R'\le  R}\D_3({R'}^{-1},p,\nu,m,E))(\sum_{\Delta\in\Di_{R^{-1/2}}([0,1]^2)}\|E_{\Delta}g\|_{L^p(w_{B_R,E})}^2)^{1/2}.$$
The result in Theorem \ref{ch ft7wbtfgb6n17r782brym9,iqmivpk[l} now follows since $C,C_\epsilon$ do not depend on $\nu$.
\bigskip

To summarize, the proof of Theorem \ref{ch ft7wbtfgb6n17r782brym9,iqmivpk[l}  for $n=3$ relied on the hypothesis that the contribution coming from squares $\beta$ living near a line is controlled by the negligible lower dimensional quantity $\D_{2}(\delta,p)=O(\delta^{-\epsilon})$. When $n\ge 4$, the contribution from the cubes near a hyperplane $H$ in $[0,1]^{n-1}$ will be similarly controlled by $\D_{n-1}(\delta,p)$. That is because $\pi^{-1}(H)$ is a lower dimensional  elliptic paraboloid whose principal curvatures are $\sim 1$, uniformly over $H$. This paraboloid is an affine image of $\W^{n-2}$, and can be analyzed using parabolic rescaling. When $n=2$, there is no such lower dimensional contribution.

\bigskip

\section{From multilinear Kakeya to multilinear decouplings}
\bigskip

We start by recalling the following multilinear Kakeya inequality due to Bennett, Carbery and Tao, \cite{BCT}. We refer the reader to \cite{Gu} for a  short proof.
\begin{theorem}
\label{nt3}
Let $0<\nu<1$. Consider $n$ families $\P_j$ consisting of tiles (rectangular boxes) $P$ in $\R^n$ having the following properties

(i)  each $P$ has  $n-1$ side lengths  equal to $R^{1/2}$  and one side length equal to $R$ which points in the direction of the unit vector  $v_P$
\medskip

(ii) $v_{P_1}\wedge\ldots\wedge v_{P_n}\ge \nu$ for each $P_i\in\P_i$
\medskip

(iii) all tiles are subsets of a fixed cube  $B_{4R}$ of side length $4R$
\medskip

Then we have the following inequality
\begin{equation}
\label{ne3}
\nint_{B_{4R}}|\prod_{j=1}^{n}F_j|^{\frac{1}{n-1}}\lesssim_{\epsilon, \nu }R^{\epsilon} \prod_{j=1}^{n}|\nint_{B_{4R}}F_j|^{\frac1{n-1}}
\end{equation}
for all functions  $F_j$ of the form
$$F_j=\sum_{P\in\P_j}c_P1_P.$$
The implicit constant will not depend on $R,c_P,\P_j$.
\end{theorem}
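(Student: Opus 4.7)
My plan is to reduce Theorem \ref{nt3} to the standard multilinear Kakeya inequality with unit-radius tubes via a dilation, and then to invoke either the Bennett--Carbery--Tao heat-flow argument or Guth's polynomial partitioning argument. For concreteness I would follow Guth's approach, which is purely combinatorial-algebraic and produces the sharp $\rho^\epsilon$ loss directly.

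For the reduction to unit-radius tubes, dilate $\R^n$ by the factor $R^{-1/2}$. Each tile becomes a tube of cross-section $1$ and length $\rho:=R^{1/2}$ in the direction $v_P$, the ball $B_{4R}$ becomes $B_{4\rho}$, and the transversality condition is unchanged since it depends only on directions. Because both sides of \eqref{ne3} involve only normalized averages, the inequality is invariant under this dilation, and it suffices to prove it for unit-radius tubes of length $\rho$ in $B_{4\rho}$, with a loss $\rho^{2\epsilon}$ in place of $R^\epsilon$.

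Let $K(\rho,\nu)$ denote the best constant in the rescaled inequality. The proof proceeds by induction on $\rho$ using polynomial partitioning. Starting from the trivial base case $K(\rho,\nu)\lesssim_\nu 1$ for $\rho=O(1)$, I would use a ham-sandwich argument to produce a polynomial $P$ of degree $D\sim\rho^\sigma$ (with $\sigma$ small) whose zero set $Z(P)$ partitions $B_{4\rho}$ into $\sim D^n$ cells on which each of the $n$ masses $\int F_j$ is roughly equally distributed. By B\'ezout, each tube from a family $\P_j$ meets $O(D)$ cells, so the restriction to any single cell is a Kakeya configuration to which the inductive hypothesis at a smaller scale applies. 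The contribution from tubes that concentrate near $Z(P)$ is handled by a second inductive call on a thin algebraic neighborhood, whose relative volume is small.

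The main obstacle lies in combining the cellular estimates into a global one. A naive H\"older bound of the form $\sum_i \prod_j a_{j,i}^{1/(n-1)}\leq \prod_j\bigl(\sum_i a_{j,i}\bigr)^{1/(n-1)}$ is false, since the H\"older exponents sum to $n/(n-1)$ rather than $1$, so the summation over cells must be carefully balanced against the algebraic contribution and iterated. This is exactly where the transversality hypothesis $v_{P_1}\wedge\cdots\wedge v_{P_n}\geq\nu$ enters essentially: it ensures that the total overlap multiplicity of tubes from distinct families behaves like a product rather than a pile-up, in the same spirit as the Loomis--Whitney inequality in the axis-aligned limit. For $n=2$ one can bypass this entire machinery, since a direct Cauchy--Schwarz computation, using that two $\nu$-transverse tubes of dimensions $R\times R^{1/2}$ intersect in an area $\lesssim \nu^{-1}R$, already gives the inequality with constant $\nu^{-1}$ and no $R^\epsilon$ loss whatsoever.
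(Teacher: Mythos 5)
The paper does not give a proof of Theorem~\ref{nt3} at all: it is explicitly black-boxed, with a pointer to Bennett--Carbery--Tao \cite{BCT} for the original heat-flow proof and to Guth \cite{Gu} for a short proof. So there is no ``paper's own proof'' to compare against; what you have written is a sketch of an external result. Two substantive issues with that sketch.

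First, you appear to conflate two different arguments of Guth. The reference \cite{Gu} cited here is the 2015 ``short proof,'' whose mechanism is a multiscale induction on a parameter $S$ combined with the Loomis--Whitney inequality applied on small balls where the tube directions are nearly axis-parallel; it uses no algebraic geometry at all. What you describe --- polynomial ham-sandwich, B\'ezout, cell decomposition with an algebraic wall term --- is the machinery of Guth's 2010 \emph{Acta Mathematica} proof of the \emph{endpoint} multilinear Kakeya, which is a harder theorem and a considerably longer argument involving algebraic topology (Borsuk--Ulam and a directed-volume/visibility functional), not merely cells and B\'ezout. Either route can in principle yield \eqref{ne3}, but your proposal is not a sketch of the short proof the paper points to.

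Second, and more importantly, the polynomial-partitioning sketch has a genuine gap exactly where the argument is hardest. You correctly observe that the naive cell-wise H\"older bound
$$\sum_i \prod_{j=1}^n a_{j,i}^{\frac{1}{n-1}} \le \prod_{j=1}^n \Bigl(\sum_i a_{j,i}\Bigr)^{\frac{1}{n-1}}$$
fails because the exponents sum to $\frac{n}{n-1}>1$, and you note that it ``must be carefully balanced against the algebraic contribution and iterated'' --- but you do not say how. This is not a technicality to be waved through: resolving it (in the endpoint argument, via the ham-sandwich bisection of $n$ directed-volume functionals and the visibility lemma; in the multiscale argument, via the axis-parallel reduction and the $S^\epsilon$-gain per scale) \emph{is} the proof. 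As written, your proposal records the correct rescaling, a correct and elementary treatment of $n=2$, and an accurate diagnosis of the difficulty for $n\ge 3$, but it does not supply the mechanism that overcomes it, so it is not a complete proof.
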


We use this to prove the following key result.

\begin{theorem}
\label{nt4}
Let  $p\ge \frac{2n}{n-1}$ and $\delta<1$. Consider $n$ $\nu$-transverse cubes $Q_1,\ldots,Q_n\subset[0,1]^{n-1}$.
Let $B$ be an arbitrary cube in $\R^n$ with side length $\delta^{-2}$, and let $\B$ be the (unique) partition  of $B$ into  cubes $\Delta$ of side length  $\delta^{-1}$. Then for each $g:[0,1]\to \C$ we have
\begin{equation}
\label{hdgfhgdsfdshfjhdkghfdgkjfkjhgklfirs}
\frac1{|\B|}\sum_{\Delta\in\B}\left[\prod_{i=1}^{n}(\sum_{Q_{i,1}\in\Di_{\delta}(Q_i)}\|E_{Q_{i,1}}g\|_{L_{\sharp}^{\frac{p(n-1)}{n}}(w_\Delta)}^2)^{1/2}\right]^{p/{n}}
\end{equation}
\begin{equation}
\label{hdgfhgdsfdshfjhdkghfdgkjfkjhgkl}
\lesssim_{\epsilon,\nu}\delta^{-\epsilon} \left[\prod_{i=1}^{n}(\sum_{Q_{i,1}\in\Di_{\delta}(Q_{i})}\|E_{Q_{i,1}}g\|_{L_{\sharp}^{\frac{p(n-1)}{n}}(w_B)}^2)^{1/2}\right]^{p/n}.
\end{equation}
Moreover, the implicit constant is independent of $g,\delta,B$.
\end{theorem}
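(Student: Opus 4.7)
The strategy is to reduce Theorem \ref{nt4} to the multilinear Kakeya inequality (Theorem \ref{nt3}) via the locally constant property. For each cap $Q_{i,1}\in\Di_\delta(Q_i)$, $E_{Q_{i,1}}g$ has Fourier support in an $O(\delta^2)$-neighborhood of a diameter-$\delta$ cap of $\W^{n-1}$, so by the uncertainty principle $|E_{Q_{i,1}}g|$ is essentially constant on tiles $T\in\mathcal{T}_{Q_{i,1}}$ of dimensions $\delta^{-1}\times\cdots\times\delta^{-1}\times\delta^{-2}$, with long axis parallel to the normal of $\W^{n-1}$ above $Q_{i,1}$. Since each $\Delta\in\B$ has side $\delta^{-1}$, smaller than every tile side, a standard Schwartz-convolution argument (together with the weight manipulations of Lemma \ref{nl9}) yields the pointwise comparison
\[
\|E_{Q_{i,1}}g\|_{L^q_\sharp(w_\Delta)}\;\lesssim\;|E_{Q_{i,1}}g(x_\Delta)|,\qquad x_\Delta\in\Delta.
\]
Passing this through the $\ell^2$-sum and averaging over $\Delta\in\B$ reduces the left-hand side of Theorem \ref{nt4} to
\[
\mathrm{LHS}\;\lesssim\;\frac{1}{|B|}\int_B\prod_{i=1}^n G_i(x)^a\,dx,\qquad a:=\frac{p}{2n},\quad G_i:=\sum_{Q_{i,1}\in\Di_\delta(Q_i)}|E_{Q_{i,1}}g|^2.
\]

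The hypothesis $p\ge 2n/(n-1)$ is equivalent to $a(n-1)=q/2\ge 1$. I would exploit this by writing $G_i^a=(G_i^{a(n-1)})^{1/(n-1)}$ and viewing $G_i^{a(n-1)}$ (up to multiplicative constants and absorbable tile-overlap losses) as a tile-indicator sum $\sum_T\tilde c_T\,1_T$, whose long directions are the $\nu$-transverse normals to $\W^{n-1}$ above the caps. Multilinear Kakeya (Theorem \ref{nt3}) then gives
\[
\frac{1}{|B|}\int_B\prod_i(G_i^{a(n-1)})^{1/(n-1)}\,dx\;\lesssim_\epsilon\;\delta^{-\epsilon}\prod_i\Bigl(\frac{1}{|B|}\int_B G_i^{a(n-1)}\,w_B\Bigr)^{1/(n-1)},
\]
and Minkowski's integral inequality (valid because $a(n-1)\ge 1$), applied to $G_i=\sum_{Q_{i,1}}|E_{Q_{i,1}}g|^2$ with exponent $a(n-1)$, yields
\[
\frac{1}{|B|}\int_B G_i^{a(n-1)}\,w_B\;\le\;\Bigl(\sum_{Q_{i,1}}\|E_{Q_{i,1}}g\|_{L^q_\sharp(w_B)}^2\Bigr)^{a(n-1)}=\widetilde F_i^{a(n-1)},
\]
since $2a(n-1)=q$. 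Combining the two inequalities and using $a(n-1)\cdot\frac{1}{n-1}=a$ produces $\mathrm{LHS}\lesssim\delta^{-\epsilon}\prod_i\widetilde F_i^a$, which is the right-hand side.

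The main obstacle lies in the Kakeya step above: $G_i^{a(n-1)}$ is literally a tile-indicator sum only in the threshold case $a(n-1)=1$ (i.e. $p=2n/(n-1)$), since for supercritical $p$ the power destroys the disjoint-tile structure because tiles associated with different $Q_{i,1}\in\Di_\delta(Q_i)$ overlap in non-parallel directions. Converting $G_i^{a(n-1)}$ into an honest tile-indicator sum suitable for Theorem \ref{nt3} requires a wave-packet refinement whose cost must be absorbed into $\delta^{-\epsilon}$; this is the technical heart of the argument and the reason for the auxiliary weight $w_{\Delta,10E}$ introduced in \eqref{ nbo wru45y9d0=1-=c0214=-`=2e4=23-4}, which provides the slack needed to run reverse-H\"older (via \eqref{ nghbugtrt90g0-er9t-9}) between the indices $p$ and $2$. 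A secondary and easier issue is verifying that the $\nu$-transversity of the cubes $Q_1,\ldots,Q_n$ passes, with at most a constant loss, to transversity of the tile long-directions, so that Theorem \ref{nt3} may be invoked with constants uniform in $\delta$.
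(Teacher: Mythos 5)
Your high-level plan (locally constant property plus multilinear Kakeya) is the right one, but the proposal has two genuine gaps, and you have correctly sensed the second one without finding the fix the paper uses.

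First, the claimed pointwise comparison $\|E_{Q_{i,1}}g\|_{L^q_\sharp(w_\Delta)}\lesssim |E_{Q_{i,1}}g(x_\Delta)|$ is false as stated: the locally constant/uncertainty heuristic controls the pointwise value by an average (via convolution with a Schwartz bump adapted to the tile), not the other way around. Take $E_{Q_{i,1}}g(x_\Delta)=0$ with $E_{Q_{i,1}}g$ nonzero on $\Delta$. What the paper actually does is replace $\|E_Qg\|_{L^q_\sharp(w_\Delta)}$ by the explicit sup-majorant
\[
F_Q(x)=\sup_{y\in 2T_Q(x)}\|E_Qg\|_{L^q_\sharp(w_{B(y,\delta^{-1})})},
\]
which is constant on each tile $T_Q$ by construction and dominates $\|E_Qg\|_{L^q_\sharp(w_\Delta)}$ whenever $\Delta\subset 2T_Q(x)$; the bound $\|F_Q\|_{L^q_\sharp(4B)}\lesssim \|E_Qg\|_{L^q_\sharp(w_B)}$ is then proved by a convolution/Taylor-series argument.

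Second, you are right that $G_i^{a(n-1)}$ is not a tile-indicator sum once $p>\tfrac{2n}{n-1}$, but the remedy is not a wave-packet refinement and it has nothing to do with the $w_{\Delta,10E}$ weight (that weight enters only in the definition \eqref{ nbo wru45y9d0=1-=c0214=-`=2e4=23-4} of $\D(\delta,p,\nu,m,E)$, where it compensates for the loss in the reverse H\"older inequality \eqref{ nghbugtrt90g0-er9t-9} used in the passage from \eqref{ fu9ety8290-78-=1=23 -r02v9=--e32} to \eqref{ fu9ety8290-78-=1=23 -r02v9=--e321}; Theorem \ref{nt4} is stated and proved with the ordinary weight $w_\Delta$). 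Instead, the paper first pigeonholes so that, for each $i$, the quantities $\|E_{Q_{i,1}}g\|_{L^q_\sharp(w_B)}$ with $Q_{i,1}\in\Di_\delta(Q_i)$ are all within a factor of $2$, at a logarithmic cost in $\delta$. It then applies H\"older in the $Q_{i,1}$-sum to replace the $\ell^2$ norm by the $\ell^q$ norm ($q=p(n-1)/n\ge 2$), picking up the factor $(\prod_i N_i^{\frac12-\frac1q})^{p/n}$. After this, the object $\sum_{Q_{i,1}}F_{Q_{i,1}}^q$ is literally a sum of constants times tile indicators, Theorem \ref{nt3} applies directly, and the factor $\prod_i N_i^{\frac12-\frac1q}$ is recovered exactly at the end because of the comparable-size normalization. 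Without this pigeonhole-plus-H\"older step your reduction does not close.
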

\begin{remark}
This result is part of a two-stage process. Note that, strictly speaking, this inequality is not a decoupling, since the size on the frequency cubes $Q_{i,1}$  remains unchanged. However, the size of the spatial cube increases from $\delta^{-1}$ to $\delta^{-2}$, which will facilitate a subsequent decoupling, as we shall later see in Proposition \ref{hegfuyefuyrufyuryfuryowqiuoiuwdoiwd9438u9uirhcfj}.

\end{remark}
\begin{proof}
Since we can afford logarithmic losses in $\delta$, it suffices to prove the inequality with the summation on both sides restricted to  families of $Q_{i,1}$ for which  $\|E_{Q_{i,1}}g\|_{L_{\sharp}^{\frac{p(n-1)}{n}}(w_B)}$ have comparable size (within a multiplicative factor of 2), for each $i$. Indeed, the cubes $Q_{i,1}'$ satisfying (for some large enough $C=O(1)$)
$$\|E_{Q_{i,1}'}g\|_{L_{\sharp}^{\frac{p(n-1)}{n}}(w_B)}\le \delta^C\max_{Q_{i,1}\in \Di_{\delta}(Q_i)}\|E_{Q_{i,1}}g\|_{L_{\sharp}^{\frac{p(n-1)}{n}}(w_B)}$$
can be easily dealt with by using the triangle inequality, since we automatically have
$$\max_{\Delta\in \B}\|E_{Q_{i,1}'}g\|_{L_{\sharp}^{\frac{p(n-1)}{n}}(w_\Delta)}\le \delta^C\max_{Q_{i,1}\in \Di_{\delta}(Q_i)}\|E_{Q_{i,1}}g\|_{L_{\sharp}^{\frac{p(n-1)}{n}}(w_B)}.$$
This leaves only $\log_2 (\delta^{-O(1)})$ sizes to consider.
\medskip

Let us now assume that we have $N_i$ cubes $Q_{i,1}$, with $\|E_{Q_{i,1}}g\|_{L_{\sharp}^{\frac{p(n-1)}{n}}(w_B)}$ of comparable size. Since $p\ge \frac{2n}{n-1}$, by H\"older's inequality \eqref{hdgfhgdsfdshfjhdkghfdgkjfkjhgklfirs} is  at most
\begin{equation}
\label{ne4}
(\prod_{i=1}^{n}N_i^{\frac12-\frac{n}{p(n-1)}})^{p/{n}}\frac1{|\B|}\sum_{\Delta\in\B}\left[\prod_{i=1}^{n}(\sum_{Q_{i,1}}\|E_{Q_{i,1}}g\|_{L_{\sharp}^{\frac{p(n-1)}{n}}(w_\Delta)}^{\frac{p(n-1)}{n}})\right]^{\frac{1}{ n-1}}.
\end{equation}

\medskip

For each cube $Q=Q_{i,1}$ centered at $c_Q$ we cover $B$ with a family $\F_Q$ of  pairwise disjoint, mutually parallel tiles $ T_Q$.  They have   $n-1$ short sides of length $\delta^{-1}$ and one longer side of length $\delta^{-2}$, pointing in the direction of the normal $N(c_Q)$ to the paraboloid $\W^{n-1}$ at $c_Q$. Moreover, we can assume these tiles to be inside the cube $4B$. We let $ T_Q(x)$ be the  tile containing $x$, and we let $2  T_Q$ be the dilation of $ T_Q$ by a factor of 2 around its center.

Let us use $q$ to abbreviate $p(n-1)/n$.     Our goal is to control the expression

$$ \frac1{|\B|}\sum_{\Delta\in\B} \prod_i \left( \sum_{Q_{i,1}} \| E_{Q_{i,1}} g \|_{L^q_{\sharp}(w_\Delta)}^q \right)^{\frac1{n-1}}. $$

We now define $F_Q$ for $x\in \cup_{ T_Q\in\F_Q} T_Q$ by

$$ F_Q(x) := \sup_{y \in 2  T_Q(x)} \| E_Q g \|_{L^q_{\sharp}(w_{B(y, \delta^{-1})})}. $$

For any point $x \in \Delta$ we have $ \Delta \subset 2  T_Q(x)$, and so we also have

$$ \| E_Q g \|_{L^q_{\sharp}(w_\Delta)} \le F_Q (x). $$

Therefore,

$$ \frac1{|\B|}\sum_{\Delta \in \B} \prod_i \left( \sum_{Q_{i,1}} \| E_{Q_{i,1}} g \|_{L^q_{\sharp}(w_\Delta)}^q \right)^{\frac1{n-1}} \lesssim \nint_{4B} \prod_i (\sum_{Q_{i,1}} F_{Q_{i,1}}^q )^{\frac1{n-1}}. $$

Moreover, the function $F_Q^q$ is constant on each tile $ T_Q\in\F_Q$.  Applying Theorem  \ref{nt3} we get the bound

$$ \nint_{4B} \prod_{i} (\sum_{Q_{i,1}} F_{Q_{i,1}}^q )^{\frac1{n-1}} \lesssim_{\epsilon,\nu} \delta^{-\epsilon} \prod_i \left( \sum_{Q_{i,1}} \nint_{4B} F_{Q_{i,1}}^q \right)^{\frac1{n-1}}. $$

It remains to check that for each $Q=Q_{i,1}$

\begin{equation} \label{FJbound} \| F_Q \|_{L^q_{\sharp}(4B)} \lesssim \| E_Q g \|_{L^q_{\sharp}(w_B)}. \end{equation}
Once this is established, it follows  that \eqref{ne4} is dominated by
\begin{equation}
\label{ne5}
\delta^{-\epsilon}(\prod_{i=1}^{n}N_i^{\frac12-\frac{n}{p(n-1)}})^{p/{n}}\prod_{i=1}^{n}(\sum_{Q_{i,1}}\|E_{Q_{i,1}}g\|_{L_{\sharp}^{\frac{p(n-1)}{n}}(w_B)}^{\frac{p(n-1)}n})^{\frac{1}{n-1}}.
\end{equation}
Recalling the restriction we have made on $Q_{i,1}$, \eqref{ne5} is comparable to
$$\delta^{-\epsilon}\left[\prod_{i=1}^{n}(\sum_{Q_{i,1}}\|E_{Q_{i,1}}g\|_{L_{\sharp}^{\frac{p(n-1)}{n}}(w_B)}^2)^{1/2}\right]^{p/n},$$
as desired.
\medskip

To prove \eqref{FJbound}, we may assume $Q=[-\delta/2,\delta/2]^{n-1}$, and thus $\widehat{E_Qg}$ will be supported in $[-\delta,\delta]^{n-1}\times[-\delta^2,\delta^2]$. Fix $x=(x_1,\ldots,x_n)$ with $ T_Q(x)\in\F_Q$ and let $y\in 2 T_Q(x)$. Note that $T_Q(x)$ has sides parallel to the coordinate axes. In particular, $y=x+y'$ with $|y_j'|<4\delta^{-1}$ for $1\le j\le n-1$ and $|y_n'|<4\delta^{-2}$. Then
\begin{equation}
\label{h dfhivuhy uytugr9495m8t9349r-=349t-5=9t-0=}
\| E_Q g \|_{L^q(w_{B(y, \delta^{-1})})}^q\lesssim
\end{equation}
$$\int|E_Qg(x_1+u_1,\ldots,x_{n-1}+u_{n-1},x_n+u_n+y_n')|^qw_{B(0, \delta^{-1})}(u)du.$$
Now, using Taylor series we can write
$$|E_Qg(x_1+u_1,\ldots,x_{n-1}+u_{n-1},x_n+u_n+y_n')|$$
$$=|\int\widehat{E_Qg}(\lambda)e(\lambda\cdot(x+u)) e(\lambda_{n}y_{n}')d\lambda|\le$$
$$\le  \sum_{s_{n}=0}^{\infty}\frac{1000^{s_{n}}}{s_{n}!}|\int\widehat{E_Qg}(\lambda)e(\lambda\cdot(x+u))(\frac{\lambda_{n}}{2\delta^{2}})^{s_{n}}d\lambda|$$
$$=\sum_{s_{n}=0}^{\infty}\frac{1000^{s_{n}}}{s_{n}!} |M_{s_n}(E_Qg)(x+u)|.$$
Here $M_{s_n}$ is the operator with Fourier multiplier
$1_{\R^{n-1}}(\lambda_1,\ldots,\lambda_{n-1})m_{s_n}(\frac{\lambda_n}{2\delta^2})$, where
$$m_{s_n}(\lambda_n)=({\lambda_{n}})^{s_{n}}1_{[-1/2,1/2]}(\lambda_{n}).$$
We are able to insert the cutoff because of our initial restriction on the Fourier support of $E_Qg$.

Plugging this estimate into \eqref{h dfhivuhy uytugr9495m8t9349r-=349t-5=9t-0=} we obtain
$$\| E_Q g \|_{L^q_{\sharp}(w_{B(y, \delta^{-1})})}^q\lesssim\sum_{s_{n}=0}^{\infty}\frac{1000^{s_{n}}}{s_{n}!}\|M_{s_n}(E_Qg)\|_{L_\sharp^q(w_{B(x,\delta^{-1})})}^q.$$
Recalling the definition of $F_Q$ and the fact that
$$\delta^n\int_{4B}w_{B(x,\delta^{-1})}(z)dx\lesssim w_B(z),\;z\in\R^n$$
we conclude that
\begin{equation}
\label{hgefcgeryiuftrebyubfureywuyui}
\| F_Q \|_{L^q_{\sharp}(4B)}^q\lesssim  \sum_{s_{n}=0}^{\infty}\frac{1000^{s_{n}}}{s_{n}!}\|M_{s_n}(E_Qg)\|_{L_\sharp^q(w_B)}^q.
\end{equation}

Note that $t\mapsto t^{s_{n}}1_{[-1/2,1/2]}(t)$ agrees on $[-1/2,1/2]$ with a compactly supported smooth function $m_{s_n}^{*}$ defined on $\R$,  with derivatives of any given order uniformly bounded over $s_n$.  It follows that
$$|\widehat{m_{s_n}^*}(x_n)|\lesssim\xi(x_n),$$
with implicit constant independent of $s_n$, where
$$\xi(x_n)\lesssim_M (1+|x_n|)^{-M},$$
for all $M>0$. Let $M_{s_n}^{*}$ denote the operator with multiplier $1_{\R^{n-1}}(\lambda_1,\ldots,\lambda_{n-1})m_{s_n}^{*}(\frac{\lambda_n}{2\delta^2})$.
 We can now write
$$|M_{s_n}(E_Qg)(x)|=|M^{*}_{s_n}(E_Qg)(x)|\lesssim |E_Qg|\odot\xi_{\delta^2}(x)$$
where $\odot$ denotes the convolution  with respect to the last variable $x_n$, and
$$\xi_{\delta^2}(x_n)=\delta^{2}\xi(\delta^2x_n).$$
Using this, one can easily check that
$$\|M_{s_n}(E_Qg)\|_{L^q(w_B)}^q\lesssim \langle |E_Qg|^q\odot\xi_{\delta^2},w_B\rangle$$$$ =\langle |E_Qg|^q,\xi_{\delta^2}\odot w_B\rangle\lesssim \langle |E_Qg|^q,w_B\rangle.$$
Combining this with \eqref{hgefcgeryiuftrebyubfureywuyui} leads to the proof of \eqref{FJbound}
$$\| F_Q \|_{L^q_{\sharp}(4B)}^q\lesssim \sum_{s_{n}=0}^{\infty}\frac{1000^{s_{n}}}{s_{n}!}\|E_Qg\|_{L_\sharp^q(w_B)}^q$$
$$\lesssim \|E_Qg\|_{L_\sharp^q(w_B)}^q.$$
The argument is now complete.

\end{proof}

\section{The iteration scheme}
\label{sec:it}
Let $0<\nu<1$. Throughout this section we fix some $0<\delta<1$ and also $n$  $\nu$-transverse cubes $Q_1,\ldots,Q_n\subset[0,1]^{n-1}$ with side length at least $\delta$.

For a positive integer $s$, $B^s$ will refer to cubes in $\R^n$ with side length $l(B^s)=\delta^{-s}$ and arbitrary centers. We will only encounter cubes $B\subset\R^n$ with side length $l(B)\in 2^{\N}$. This will allow us to perform decompositions using cubes of smaller size in $2^{\N}$.

The implicit constants will be independent of $\delta$,  $g$ and the spatial cubes $Q_i$.
\medskip

Let $t,p\ge 1$ and consider the positive integers $q\le s \le r$.
We define
$$D_t(q,B^r,g)=\big[\prod_{i=1}^{n}(\sum_{Q_{i,q}\in \Di_{\delta^q}(Q_i)}\|E_{Q_{i,q}}g\|_{L^{t}_{\sharp}(w_{B^r})}^2)^{1/2}\big]^{\frac{1}{n}
}.$$

To simplify notation, we will denote by $\B_s(B^r)=\Di_{\delta^{-s}}(B^r)$ the (unique) cover of $B^r$ with  cubes $B^s$ of side length $\delta^{-s}$. Define
$$A_p(q,B^r,s,g)=\big(\frac1{|\B_s(B^r)|}\sum_{B^s\in\B_s(B^r)}D_2(q,B^s,g)^p\big)^{\frac1p}.$$
The letter $A$ will remind us that we have an average.  Note that when $r=s$,
$$A_p(q,B^r,r,g)=D_2(q,B^r,g).$$
For $\frac{2n}{n-1}\le p$, let  $0\le \kappa_p\le 1$ satisfy
$$\frac{n}{p(n-1)}=\frac{1-\kappa_p}{2}+\frac{\kappa_p}{p}.$$
In other words,
$$\kappa_p=\frac{pn-p-2n}{(p-2)(n-1)}.$$
\bigskip
Set also $\kappa_p=0$ for $2\le p\le \frac{2n}{n-1}$.

The next proposition will combine our main two decoupling devices, Theorem \ref{nt4} and the $L^2$ decoupling. The result is a partial decoupling. Indeed, note that the term $A_p(1,B^2,1,g)$ in  \eqref{jhfvheruyg7erigt7t86} involves frequency  cubes of size $\delta$, while the term $A_p(2,B^2,2,g)$ involves frequency  cubes of smaller size $\delta^2$. Inequality \eqref{jhfvheruyg7erigt7t86} is only a partial decoupling in the range $p>\frac{2n}{n-1}$, since  the weight $\kappa_p$ of the term $D_p(1,B^2,g)$ is nonzero. But this weight is zero  when $p\le \frac{2n}{n-1}$. For these values of $p$, inequality \eqref{jhfvheruyg7erigt7t86} has the very simple form
$$A_{p}(1,B^2,1,g)\lesssim_{\epsilon,\nu}\delta^{-\epsilon}A_{p}(2,B^2,2,g).$$
This can be easily iterated and leads to a simpler proof of Theorem \ref{main:steamline} in the range $2\le p\le \frac{2n}{n-1}$. See the discussion at the end of this section.
\begin{proposition}
\label{hegfuyefuyrufyuryfuryowqiuoiuwdoiwd9438u9uirhcfj}
We have for each  $B^2$ and $p\ge 2$
\begin{equation}
\label{jhfvheruyg7erigt7t86}
A_p(1,B^2,1,g)\lesssim_{\epsilon,\nu}\delta^{-\epsilon}A_p(2,B^2,2,g)^{1-\kappa_p}D_p(1,B^2,g)^{\kappa_p}.
\end{equation}
\end{proposition}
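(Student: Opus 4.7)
The plan is to chain Theorem \ref{nt4} (multilinear Kakeya decoupling) with Proposition \ref{nl3} ($L^2$ decoupling), bridged by Hölder interpolation at the level of $L^q_\sharp$ norms. The key observation is that the hybrid exponent $q:=\frac{p(n-1)}{n}$ appearing in Theorem \ref{nt4} is exactly the one singled out by the interpolation identity $\frac{1}{q}=\frac{1-\kappa_p}{2}+\frac{\kappa_p}{p}$ that defines $\kappa_p$.

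I first treat the main range $p\ge \frac{2n}{n-1}$, where $q\ge 2$ so that Theorem \ref{nt4} applies directly. Since $w_{B^1}/|B^1|$ behaves like a probability measure (up to a harmless constant depending on $E$), Hölder gives the pointwise comparison $\|E_{Q_{i,1}}g\|_{L^2_\sharp(w_{B^1})}\le \|E_{Q_{i,1}}g\|_{L^q_\sharp(w_{B^1})}$, and so $D_2(1,B^1,g)\le D_q(1,B^1,g)$. Consequently
$$A_p(1,B^2,1,g)^p\le \frac{1}{|\B_1(B^2)|}\sum_{B^1\in\B_1(B^2)}D_q(1,B^1,g)^p,$$
and the right-hand side is precisely the quantity controlled in Theorem \ref{nt4} (once one unwinds the definition of $D_q$), yielding a bound by $\lesssim_{\epsilon,\nu}\delta^{-\epsilon p}D_q(1,B^2,g)^p$.

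Now I interpolate on the right. Applying Hölder to each $E_{Q_{i,1}}g$ with respect to the measure $w_{B^2}$ with exponents $(2,p)$ and weights $(1-\kappa_p,\kappa_p)$ gives
$$\|E_{Q_{i,1}}g\|_{L^q_\sharp(w_{B^2})}\le \|E_{Q_{i,1}}g\|_{L^2_\sharp(w_{B^2})}^{1-\kappa_p}\|E_{Q_{i,1}}g\|_{L^p_\sharp(w_{B^2})}^{\kappa_p},$$
and a second Hölder applied to the sums over $Q_{i,1}\in\Di_{\delta}(Q_i)$ (together with the product over $i$) yields the clean factorization
$$D_q(1,B^2,g)\le D_2(1,B^2,g)^{1-\kappa_p}D_p(1,B^2,g)^{\kappa_p}.$$
Next, on the spatial scale $\delta^{-2}$ the uncertainty principle permits decoupling the frequency side down to scale $\delta^2$: applying Proposition \ref{nl3} to each $\|E_{Q_{i,1}}g\|_{L^2(w_{B^2})}^2$, then summing over $Q_{i,1}$, produces $D_2(1,B^2,g)\lesssim D_2(2,B^2,g)$, which equals $A_p(2,B^2,2,g)$ since $\B_2(B^2)=\{B^2\}$. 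Combining these steps gives \eqref{jhfvheruyg7erigt7t86} in this range.

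For the remaining range $2\le p\le \frac{2n}{n-1}$, where $\kappa_p=0$, I simply use Hölder in the outer averaging to write $A_p(1,B^2,1,g)\le A_{2n/(n-1)}(1,B^2,1,g)$ and invoke the endpoint $p=\frac{2n}{n-1}$ version (for which $q=2$ and the interpolation step is trivial), followed again by $L^2$-decoupling to pass from $D_2(1,B^2,g)$ to $A_p(2,B^2,2,g)$. The substantive analytic content all lives in Theorem \ref{nt4} and Proposition \ref{nl3}; the only real obstacle is keeping the bookkeeping of weights and Hölder exponents clean, in particular verifying that the intermediate exponent $q=\frac{p(n-1)}{n}$ is precisely matched by the definition of $\kappa_p$ so that no extra loss appears.
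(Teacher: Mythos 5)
Your proposal is correct and follows essentially the same path as the paper's proof: upgrade $L^2_\sharp$ to $L^{p(n-1)/n}_\sharp$ at scale $\delta^{-1}$ by Hölder, apply Theorem \ref{nt4} to pass to the $\delta^{-2}$ spatial scale, split the $L^{p(n-1)/n}_\sharp$ norm at that scale via Hölder interpolation with exponents $(1-\kappa_p,\kappa_p)$ between $L^2_\sharp$ and $L^p_\sharp$, and finish with Proposition \ref{nl3} on the $L^2$ piece; the low range $2\le p\le \frac{2n}{n-1}$ is reduced to the endpoint $p=\frac{2n}{n-1}$ exactly as in the paper.
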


\begin{proof}
Assume first that $p\ge \frac{2n}{n-1}$. By H\"older,  $$\|E_{Q_{i,1}}g\|_{L^{2}_{\sharp}(w_{B^1})}\lesssim \|E_{Q_{i,1}}g\|_{L^{\frac{p(n-1)}{n}}_{\sharp}(w_{B^1})}.$$
Using this and  Theorem \ref{nt4},  we can write
\begin{equation}
A_p(1,B^2,1,g)\lesssim_{\epsilon,\nu}\delta^{-\epsilon} (\prod_{i=1}^{n}\sum_{Q_{i,1}\in\Di_{\delta}(Q_i)}\|E_{Q_{i,1}}g\|_{L_{\sharp}^{\frac{p(n-1)}n}(w_{B^2})}^2)^{\frac1{2n}}.
\end{equation}

Using  H\"older's inequality we can dominate this by
\begin{equation}
\label{dgchgdshgfdsfkjdsfhkjfdhgjkfdhgjfdkh}
\le (\prod_{i=1}^{n}\sum_{Q_{i,1}\in\Di_{\delta}(Q_i)}\|E_{Q_{i,1}}g\|_{L_{\sharp}^{2}(w_{B^2})}^2)^{\frac{1-\kappa_p}{2n}}(\prod_{i=1}^{n}\sum_{Q_{i,1}\in\Di_{\delta}(Q_i)}
\|E_{Q_{i,1}}g\|_{L_{\sharp}^{p}(w_{B^2})}^2)^{\frac{\kappa_p}{2n}}.
\end{equation}
It suffices now to apply $L^2$ decoupling (Proposition \ref{nl3}) to the first term in \eqref{dgchgdshgfdsfkjdsfhkjfdhgjkfdhgjfdkh}.
\medskip

We have ``interpolated" between $L^2$ and $L^p$. We have used $L^2$ because - as explained in Section \ref{L2} - this space facilitates  the most efficient decoupling. Indeed, note that the term  $A_p(2,B^2,2,g)$ on the right hand side of \eqref{jhfvheruyg7erigt7t86} has cubes of side length $\delta^2$, which is as small as one can hope for, given the size of the spatial cube $B^2$.
\medskip

If $p<\frac{2n}{n-1}$, using \eqref{jhfvheruyg7erigt7t86} with $p=\frac{2n}{n-1}$ we can write
$$A_p(1,B^2,1,g)\le A_{\frac{2n}{n-1}}(1,B^2,1,g)\lesssim_{\epsilon,\nu}\delta^{-\epsilon}  A_{\frac{2n}{n-1}}(2,B^2,2,g) =\delta^{-\epsilon}A_p(2,B^2,2,g).$$
\end{proof}
 Inequality \eqref{jhfvheruyg7erigt7t86} is easily seen to be true with $\kappa_p$ replaced with 1, by simply invoking \eqref{jgiou tiguyt reym89r--9048-0023=-1r90-8=4-293=-0} and the fact that $D_2\lesssim D_p $. Consequently, it will be true for each exponent in the interval $[\kappa_p,1]$. The example $g=1_Q$ with $l(Q)=\delta$ shows that one can not consider exponents smaller than $\kappa_p$.
The relevant thing about $\kappa_p$ that will be used in the final section is the fact that $\kappa_p<\frac12$ precisely in the subcritical range $p<\frac{2(n+1)}{n-1}$.

\bigskip

The following sequence of propositions will allow us to rewrite  \eqref{jhfvheruyg7erigt7t86} in a form that is more suitable for iteration.
\begin{proposition}
\label{hegfuyefuyrufyuryfuryowqiuoiuwdoiwd9438u9uirhcfjsec}
We have for each  cube $B^{M}$ with $M\ge 2$ and $p\ge 2$
\begin{equation}
\label{jhfvheruyg7erigt7t86sec}
A_p(1,B^M,1,g)\lesssim_{\epsilon,\nu}\delta^{-\epsilon}A_p(2,B^M,2,g)^{1-\kappa_p}D_p(1,B^M,g)^{\kappa_p}.
\end{equation}
The implicit constant is independent of $M$.
\end{proposition}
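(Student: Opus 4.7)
The plan is to upgrade the previous proposition from the cube $B^2$ to a larger cube $B^M$ by a standard pigeonhole-and-average argument, combined with H\"older's inequality (to split the product on the right-hand side of \eqref{jhfvheruyg7erigt7t86}) and the weight estimate \eqref{jgiou tiguyt reym89r--9048-0023=-1r90-8=4-293=-0} (to recombine the weights $w_{B^2}$ into $w_{B^M}$). Note that $\mathcal{B}_1(B^M)$ is the disjoint union $\bigsqcup_{B^2\in\mathcal{B}_2(B^M)}\mathcal{B}_1(B^2)$, and $|\mathcal{B}_1(B^M)|=|\mathcal{B}_2(B^M)|\cdot|\mathcal{B}_1(B^2)|$. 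Therefore
$$A_p(1,B^M,1,g)^p=\frac{1}{|\mathcal{B}_2(B^M)|}\sum_{B^2\in\mathcal{B}_2(B^M)}A_p(1,B^2,1,g)^p.$$

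Next, I would apply Proposition \ref{hegfuyefuyrufyuryfuryowqiuoiuwdoiwd9438u9uirhcfj} to each $A_p(1,B^2,1,g)$ appearing on the right, which gives
$$A_p(1,B^M,1,g)^p\lesssim_{\epsilon,\nu}\delta^{-\epsilon p}\,\frac{1}{|\mathcal{B}_2(B^M)|}\sum_{B^2}A_p(2,B^2,2,g)^{p(1-\kappa_p)}D_p(1,B^2,g)^{p\kappa_p}.$$
H\"older's inequality on the sum in $B^2$, with conjugate exponents $\frac{1}{1-\kappa_p}$ and $\frac{1}{\kappa_p}$ (with the obvious degenerate reading if $\kappa_p=0$), separates the two factors and reduces the problem to showing
$$\frac{1}{|\mathcal{B}_2(B^M)|}\sum_{B^2}A_p(2,B^2,2,g)^p\le A_p(2,B^M,2,g)^p$$
and
$$\frac{1}{|\mathcal{B}_2(B^M)|}\sum_{B^2}D_p(1,B^2,g)^p\lesssim D_p(1,B^M,g)^p.$$
The first identity is immediate: one unfolds the definition of $A_p(2,B^2,2,g)^p$ as an average of $D_2(2,B^2{}',g)^p$ over $B^2{}'\in\mathcal{B}_2(B^2)$ and observes that $\mathcal{B}_2(B^M)=\bigsqcup_{B^2}\mathcal{B}_2(B^2)$.

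The second inequality is the only non-trivial piece, and this is what I expect to be the main technical obstacle. Writing $D_p(1,B^2,g)^p=\prod_{i=1}^{n}X_i(B^2)^{p/n}$ with $X_i(B^2):=(\sum_{Q_{i,1}\in\Di_\delta(Q_i)}\|E_{Q_{i,1}}g\|_{L^p_\sharp(w_{B^2})}^2)^{1/2}$, H\"older with $n$ equal exponents reduces matters to proving, for each fixed $i$,
$$\frac{1}{|\mathcal{B}_2(B^M)|}\sum_{B^2}X_i(B^2)^p\lesssim X_i(B^M)^p.$$
I would obtain this by Minkowski's inequality in $\ell^{p/2}$ applied to the $B^2$-summation against the $Q_{i,1}$-summation (legal since $p\ge 2$), which yields
$$\sum_{B^2}X_i(B^2)^p\le\Bigl(\sum_{Q_{i,1}}\bigl(\sum_{B^2}\|E_{Q_{i,1}}g\|_{L^p_\sharp(w_{B^2})}^p\bigr)^{2/p}\Bigr)^{p/2},$$
and then applying the pointwise bound $\sum_{B^2\in\mathcal{B}_2(B^M)}w_{B^2}\lesssim w_{B^M}$ from \eqref{jgiou tiguyt reym89r--9048-0023=-1r90-8=4-293=-0} to each inner sum $\sum_{B^2}\frac{1}{|B^2|}\int|E_{Q_{i,1}}g|^pw_{B^2}\lesssim\frac{1}{|B^2|}\int|E_{Q_{i,1}}g|^pw_{B^M}=|\mathcal{B}_2(B^M)|\,\|E_{Q_{i,1}}g\|_{L^p_\sharp(w_{B^M})}^p.$ Chaining everything together and taking $p$-th roots gives \eqref{jhfvheruyg7erigt7t86sec}, with implicit constants depending only on those of Proposition \ref{hegfuyefuyrufyuryfuryowqiuoiuwdoiwd9438u9uirhcfj} and on the constants appearing in \eqref{jgiou tiguyt reym89r--9048-0023=-1r90-8=4-293=-0}, hence independent of $M$.
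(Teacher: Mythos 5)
Your proof is correct and follows essentially the same route as the paper: raise \eqref{jhfvheruyg7erigt7t86} to the power $p$, sum (or average) over $B^2\in\B_2(B^M)$, apply H\"older with conjugate exponents $\tfrac{1}{1-\kappa_p}$ and $\tfrac{1}{\kappa_p}$, and then control the $D_p$ factor by Minkowski's inequality in $\ell^{p/2}$ together with $\sum_{B^2}w_{B^2}\lesssim w_{B^M}$. You are in fact a bit more careful than the paper's own exposition, which leaves implicit both the identity $\tfrac{1}{|\B_2(B^M)|}\sum_{B^2}A_p(2,B^2,2,g)^p=A_p(2,B^M,2,g)^p$ and the $|\B_2(B^M)|$ normalization factors that make the bookkeeping close.
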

\begin{proof}Raise \eqref{jhfvheruyg7erigt7t86} to the power $p$, sum over all cubes $B^2\in\B_2(B^M)$ and use H\"older's inequality
$$\|(a_jb_j)_j\|_{l^1}\le \|(a_j)_j\|_{l^{\frac{1}{1-\kappa_p}}}\|(b_j)_j\|_{l^{\frac{1}{\kappa_p}}}.$$
The only thing that needs to be verified is the inequality
\begin{equation}
\label{rejiogutiugtupgroofpqowefp[owef]-op}
\sum_{B^2\in\B_2(B^M)}D_p(1,B^2,g)^p\lesssim D_p(1,B^M,g)^p.
\end{equation}
This however immediately follows from Minkowski's inequality (recall $p\ge 2$) and the fact that
$$\sum_{B^2\in\B_2(B^M)}w_{B^2}\lesssim w_{B^M}.$$
\end{proof}

\begin{proposition}
\label{hegfuyefuyrufyuryfuryowqiuoiuwdoiwd9438u9uirhcfjth}
Let $l,m\in \N$ with $l+1\le m$.
We have for each  cube $B^{2^m}$  and $p\ge 2$
\begin{equation}
\label{jhfvheruyg7erigt7t86th}
A_p(2^l,B^{2^m},2^l,g)\lesssim_{\epsilon,\nu}\delta^{-2^{l}\epsilon}A_p(2^{l+1},B^{2^m},2^{l+1},g)^{1-\kappa_p}D_p(2^l,B^{2^m},g)^{\kappa_p}.
\end{equation}
The implicit constant is independent of $l,m$.
\end{proposition}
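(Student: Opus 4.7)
The plan is to repeat, mutatis mutandis, the proof of Proposition \ref{hegfuyefuyrufyuryfuryowqiuoiuwdoiwd9438u9uirhcfjsec} with $\delta$ systematically replaced by $\delta^{2^{l}}$ throughout. This works because each of the three building blocks---Theorem \ref{nt4} (the multilinear Kakeya consequence), Proposition \ref{nl3} ($L^2$ decoupling), and the H\"older interpolation between $L^2_\sharp$ and $L^p_\sharp$---is scale free and can be instantiated with any $\tilde\delta<1$ in place of $\delta$.

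For the base case $m=l+1$ I first invoke Theorem \ref{nt4} with parameter $\delta^{2^l}$. The ``new'' spatial cube has side $(\delta^{2^l})^{-2}=\delta^{-2^{l+1}}=l(B^{2^{l+1}})$, is partitioned into cubes of side $\delta^{-2^l}=l(B^{2^l})$, and the frequency subcubes $\Di_{\delta^{2^l}}(Q_i)$ are precisely our $Q_{i,2^l}$. Passing from $L^{p(n-1)/n}_\sharp$ down to $L^2_\sharp$ on the left side via H\"older produces $A_p(2^l,B^{2^{l+1}},2^l,g)^p$. Interpolating $\|\cdot\|_{L^{p(n-1)/n}_\sharp}\le\|\cdot\|_{L^2_\sharp}^{1-\kappa_p}\|\cdot\|_{L^p_\sharp}^{\kappa_p}$ on the right side (the exponent $\kappa_p$ is exactly the one dictated by $n/(p(n-1))=(1-\kappa_p)/2+\kappa_p/p$) and following this with H\"older inside the $\ell^2$ sums over $Q_{i,2^l}$ produces the factor $\delta^{-2^l\epsilon}D_2(2^l,B^{2^{l+1}},g)^{p(1-\kappa_p)}D_p(2^l,B^{2^{l+1}},g)^{p\kappa_p}$. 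Finally I apply Proposition \ref{nl3} with $R=\delta^{-2^{l+1}}$ to each $\|E_{Q_{i,2^l}}g\|_{L^2(w_{B^{2^{l+1}}})}^2$; the required hypothesis $l(Q_{i,2^l})=\delta^{2^l}\ge\delta^{2^{l+1}}=R^{-1}$ holds since $2^l\le 2^{l+1}$, so we descend from frequency scale $\delta^{2^l}$ to $\delta^{2^{l+1}}$ and convert $D_2(2^l,B^{2^{l+1}},g)$ into $D_2(2^{l+1},B^{2^{l+1}},g)=A_p(2^{l+1},B^{2^{l+1}},2^{l+1},g)$.

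To pass from $m=l+1$ to general $m\ge l+1$, I follow the telescoping argument used in the proof of Proposition \ref{hegfuyefuyrufyuryfuryowqiuoiuwdoiwd9438u9uirhcfjsec}: raise the base case to the $p$-th power, sum over $B^{2^{l+1}}\in\B_{2^{l+1}}(B^{2^m})$, and apply the H\"older inequality $\|(a_jb_j)_j\|_{\ell^1}\le\|(a_j)_j\|_{\ell^{1/(1-\kappa_p)}}\|(b_j)_j\|_{\ell^{1/\kappa_p}}$. Since $A_p$ is itself an $\ell^p$-average, for $q\in\{l,l+1\}$ one has
\[
\sum_{B^{2^{l+1}}\in\B_{2^{l+1}}(B^{2^m})}A_p(2^q,B^{2^{l+1}},2^q,g)^p=|\B_{2^{l+1}}(B^{2^m})|\cdot A_p(2^q,B^{2^m},2^q,g)^p,
\]
so the combinatorial factors match on both sides and cancel after taking the $p$-th root. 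The remaining ingredient is the inequality $\sum_{B^{2^{l+1}}}D_p(2^l,B^{2^{l+1}},g)^p\lesssim D_p(2^l,B^{2^m},g)^p$, the direct analogue of the bound verified at the end of the proof of Proposition \ref{hegfuyefuyrufyuryfuryowqiuoiuwdoiwd9438u9uirhcfjsec}, which follows from Minkowski's inequality in $\ell^{p/2}$ together with the pointwise estimate $\sum_{B^{2^{l+1}}}w_{B^{2^{l+1}}}\lesssim w_{B^{2^m}}$.

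The main obstacle is bookkeeping: verifying that H\"older produces precisely the exponent $\kappa_p$, that the averaging constants $|\B_{2^{l+1}}(B^{2^m})|$ cancel cleanly, and that the $\delta^{-2^l\epsilon}$ factor emerges from applying Theorem \ref{nt4} at scale $\delta^{2^l}$ rather than $\delta$. No new analytic ideas beyond those of Propositions \ref{hegfuyefuyrufyuryfuryowqiuoiuwdoiwd9438u9uirhcfj} and \ref{hegfuyefuyrufyuryfuryowqiuoiuwdoiwd9438u9uirhcfjsec} are needed, since the entire argument is insensitive to the uniform rescaling of scale indices by the factor $2^l$.
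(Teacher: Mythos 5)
Your argument is correct and rests on the same observation as the paper's: the entire chain (Theorem \ref{nt4}, $L^2$ decoupling, the H\"older interpolations, and the telescoping over $B^{2^m}$) is scale-free, so substituting $\delta^{2^l}$ for $\delta$ reparametrizes Proposition \ref{hegfuyefuyrufyuryfuryowqiuoiuwdoiwd9438u9uirhcfjsec} directly into the desired statement. The paper simply invokes Proposition \ref{hegfuyefuyrufyuryfuryowqiuoiuwdoiwd9438u9uirhcfjsec} as a black box with $\delta\mapsto\delta^{2^l}$ and $M=2^{m-l}$ rather than re-running its proof at the new scale as you do, but the content is identical.
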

\begin{proof}Apply \eqref{jhfvheruyg7erigt7t86sec} with $\delta$ replaced by $\delta^{2^l}$ and $M=2^{m-l}$.

\end{proof}
We can now iterate Proposition \ref{hegfuyefuyrufyuryfuryowqiuoiuwdoiwd9438u9uirhcfjth} to get the following immediate conclusion.
\begin{proposition}
\label{hegfuyefuyrufyuryfuryowqiuoiuwdoiwd9438u9uirhcfjfo}If $m\ge 1$ and $p\ge 2$
\begin{equation}
\label{jhfvheruyg7erigt7t86fo}
A_p(1,B^{2^m},1,g)\lesssim_{\epsilon,\nu,m}\delta^{-\epsilon}A_p(2^{m-1},B^{2^m},2^{m-1},g)^{(1-\kappa_p)^{m-1}}\prod_{l=0}^{m-2}D_p(2^l,B^{2^m},g)^{\kappa_p(1-\kappa_p)^{l}}.
\end{equation}

\end{proposition}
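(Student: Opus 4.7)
The inequality follows by iterating Proposition \ref{hegfuyefuyrufyuryfuryowqiuoiuwdoiwd9438u9uirhcfjth} for $l = 0, 1, \ldots, m-2$. Concretely, I would proceed by finite induction on $m$: the base case $m=1$ is trivial (the product is empty, the exponent $(1-\kappa_p)^0 = 1$, and the inequality reduces to $A_p(1,B^2,1,g) \lesssim A_p(1,B^2,1,g)$, absorbed in $\delta^{-\epsilon}$). For the inductive step, apply \eqref{jhfvheruyg7erigt7t86th} with $l = 0$ to obtain
\begin{equation*}
A_p(1,B^{2^m},1,g) \lesssim_{\epsilon,\nu} \delta^{-\epsilon}\, A_p(2, B^{2^m}, 2, g)^{1-\kappa_p}\, D_p(1, B^{2^m}, g)^{\kappa_p},
\end{equation*}
and then substitute the inductive bound (applied at scale $\delta$ replaced by $\delta^2$, with $m$ replaced by $m-1$, noting that a cube $B^{2^m}$ at scale $\delta$ equals a cube $B^{2^{m-1}}$ at scale $\delta^2$) for the factor $A_p(2, B^{2^m}, 2, g)$.

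Alternatively, I would just iterate directly: apply \eqref{jhfvheruyg7erigt7t86th} successively with $l=0,1,\ldots,m-2$, raising the $l$-th inequality to the power $(1-\kappa_p)^l$ before substituting. After $m-1$ applications, the factor $A_p$ carries the exponent $(1-\kappa_p)^{m-1}$, and each $D_p(2^l, B^{2^m}, g)$ picks up exponent $\kappa_p(1-\kappa_p)^l$, matching the claimed form. The $\delta$-loss accumulates as
\begin{equation*}
\prod_{l=0}^{m-2} \delta^{-2^l \epsilon (1-\kappa_p)^l} = \delta^{-\epsilon \sum_{l=0}^{m-2} 2^l(1-\kappa_p)^l},
\end{equation*}
and since the exponent on $\delta$ is a finite quantity $C_m$ depending only on $m$ (and $p$ through $\kappa_p$), we may replace $\epsilon$ by $\epsilon/C_m$ at the outset to obtain the clean bound $\delta^{-\epsilon}$, at the cost of an implicit constant depending on $m$, $\epsilon$, and $\nu$.

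\textbf{Main obstacle.} There is no substantial obstacle beyond bookkeeping: the entire argument is a geometric-series style iteration of a single inequality whose proof has already been carried out. The only mild point requiring care is ensuring that, at each stage of the iteration, the underlying spatial cube is consistently $B^{2^m}$ rather than a sub-cube -- but this is already built into the statement of Proposition \ref{hegfuyefuyrufyuryfuryowqiuoiuwdoiwd9438u9uirhcfjth}, which is stated for an arbitrary cube $B^{2^m}$ independent of the scale parameter $l$. Thus the iteration proceeds cleanly without any need to pass to smaller spatial cubes and reassemble, and the product in \eqref{jhfvheruyg7erigt7t86fo} emerges directly from the telescoping of the single-step inequality.
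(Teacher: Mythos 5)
Your proposal is correct and matches the paper's intent exactly: the paper states that \eqref{jhfvheruyg7erigt7t86fo} is an "immediate conclusion" obtained by iterating Proposition \ref{hegfuyefuyrufyuryfuryowqiuoiuwdoiwd9438u9uirhcfjth}, which is precisely your direct iteration over $l=0,\ldots,m-2$. Your bookkeeping of the accumulated $\delta$-exponent $\sum_{l=0}^{m-2}2^l(1-\kappa_p)^l$ and the observation that it is absorbed by allowing the implicit constant to depend on $m$ are exactly the points the paper leaves implicit.
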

The implicit constant is now allowed to depend on  $m$, but this dependence will prove to be completely harmless.
\bigskip

We close this section with a quick proof of
$$\D_n(\delta,p)\lesssim_{\epsilon}\delta^{-\epsilon}$$
for $2\le p\le \frac{2n}{n-1}$. This fact was first proved in \cite{Bo2}.
In this range $\kappa_p=0$ and \eqref{jhfvheruyg7erigt7t86fo} becomes a very satisfactory inequality
$$A_{p}(1,B^{2^m},1,g)\lesssim_{\epsilon,\nu,m}\delta^{-\epsilon}A_{p}(2^{m-1},B^{2^m},2^{m-1},g).$$
Combining this with \eqref{m8tvuy45t89568y0695y076o0-340}
we may write
$$\||\prod_{i=1}^n[\sum_{Q_{i,1}\in\Di_{\delta}(Q_i)}|E_{Q_{i,1}g}|^2]^{1/2n}\|_{L^{p}(B^{2^m})}\lesssim_{\epsilon,\nu,m}\delta^{-\epsilon} D_{p}(2^{m-1},B^{2^m}).$$
By invoking Cauchy--Schwarz, we can afford a rather trivial decoupling
$$\||\prod_{i=1}^nE_{Q_ig}|^{1/n}\|_{L^{p}(B^{2^m})}\le \delta^{-(n-1)/2}\||\prod_{i=1}^n[\sum_{Q_{i,1}\in\Di_{\delta}(Q_i)}|E_{Q_{i,1}g}|^2]^{1/2n}\|_{L^{p}(B^{2^m})}.$$
Combining these two and substituting $\delta^{2^m}\mapsto \delta$ we can write
$$\||\prod_{i=1}^nE_{Q_ig}|^{1/n}\|_{L^{p}(B^{1})}\lesssim_{\epsilon,\nu,m}\delta^{-\epsilon}\delta^{-(n-1)2^{-m-1}}D_{p}(\frac12,B^{1}).$$

Choose now $m$ as large as desired to argue that
$$\D_n(\delta,p,\nu)\lesssim_{\epsilon,\nu}\delta^{-\epsilon}.$$
Finally, combine this with Theorem \ref{ch ft7wbtfgb6n17r782brym9,iqmivpk[l} using induction on $n$ to argue that
$$\D_n(\delta,p)\lesssim_{\epsilon}\delta^{-\epsilon}.$$
\bigskip

Now back to the case $p>\frac{2n}{n-1}$. As mentioned earlier, \eqref{jhfvheruyg7erigt7t86fo} is only a partial decoupling in this range.
 The argument for this case presented in the next section will go as follows. Assume the linear decoupling constant satisfies $\D_n(\delta,p)\sim \delta^{-\eta_p}$. We will first apply parabolic rescaling to majorize the terms $D_p$ in \eqref{jhfvheruyg7erigt7t86fo} by some powers of $\delta^{-\eta_p}$.
Then we will combine \eqref{jhfvheruyg7erigt7t86fo} with a trivial decoupling (Cauchy--Schwarz)  to derive an upper bound on the multilinear constant $\D_n(\delta,p,\nu)$ in terms of $\delta^{-\eta_p}$. We play this against  Theorem \ref{ch ft7wbtfgb6n17r782brym9,iqmivpk[l}, which produced a lower bound for $\D_n(\delta,p,\nu)$ involving $\delta^{-\eta_p}$. These will force $\eta_p$ to be zero.

\bigskip

\section{The final argument}
\bigskip

In this section we present the details for the proof of Theorem \ref{main:steamline}. Let $E\ge 100n$. By combining the triangle and Cauchy-Schwarz inequalities we find that $\D_n(\delta,p,E)\lesssim \delta^{-C_p}$, for some $C_p$ large enough.
 For $p\ge 2$ let $\eta_{p,n,E}=\eta_{p,E}\ge 0$ be the unique (finite) number such that
\begin{equation}
\label{9855e1}
\lim_{\delta\to 0}\D_n(\delta,p,E)\delta^{\eta_{p,E}+\sigma}=0,\text{ for each }\sigma>0
\end{equation}
and
\begin{equation}
\label{9855e2}
\limsup_{\delta\to 0}\D_n(\delta,p,E)\delta^{\eta_{p,E}-\sigma}=\infty,\text{ for each }\sigma>0.
\end{equation}

We will use induction on $n$, as described at the end of Section \ref{brief}. Assume either that $n=2$, or that $n\ge 3$ and that in addition we have
$$
\D_{n-1}(\delta,p,E)\lesssim_\epsilon\delta^{-\epsilon}
$$
for $E\ge 100(n-1)$ and $2\le p\le \frac{2n}{n-2}$.
We need to prove that
$\eta_{p,n,E}=0$ for $E\ge 100n$ and $2\le p\le \frac{2(n+1)}{n-1}$.
Note that for such $p$ we automatically have that $p$ is smaller than $\frac{2n}{n-2}$, the critical index for decouplings in $\R^{n-1}$. In particular, if $n\ge 3$ our induction hypothesis guarantees that
\begin{equation}
\label{fjdnhytg7879ro3[0r9i845689409er87y76t76r6}
\D_{n-1}(\delta,p,E)\lesssim_\epsilon\delta^{-\epsilon}
\end{equation}
for each $E\ge 100n$ and $2\le p\le \frac{2(n+1)}{n-1}$.

Fix $2\le p<\frac{2(n+1)}{n-1}$ for the rest of the proof (so in particular we have $p\le 20$). Fix also $E\ge 100n$. All quantities $A_p$ and $D_p$ will be implicitly assumed to be relative to this $E$. The case $p=\frac{2(n+1)}{n-1}$ will follow via a standard limiting argument explained in the end of the section. Note that for $2\le p<\frac{2(n+1)}{n-1}$  we have
\begin{equation}
\label{eriuf8uy9-oo4tu568uy0i6950y890789}
2(1-\kappa_p)>1.
\end{equation}

We start with the following rather immediate consequence of Proposition \ref{hegfuyefuyrufyuryfuryowqiuoiuwdoiwd9438u9uirhcfjfo}.

\begin{theorem}\label{dsjfgyetfy3ycqedbf6reudiqtdyteyeuqwhgahg}
Consider  $n$  $\nu$-transverse cubes $Q_1,\ldots,Q_n\subset[0,1]^{n-1}$ with side length at least $\delta$. Then for $m\ge 1$
 and $p\ge 2$ we have $$A_p(1,B^{2^m},1,g)\lesssim_{\epsilon,\nu,m}$$$$\delta^{-(\eta_p+\epsilon)(2^m-\frac{2\kappa_p}{2\kappa_p-1}+\frac{(2(1-\kappa_p))^m}{2\kappa_p-1})}D_p(2^{m-1},B^{2^m},g),$$
with the implicit constant independent of $Q_i$.
\end{theorem}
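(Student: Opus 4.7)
The plan is to start from Proposition~\ref{hegfuyefuyrufyuryfuryowqiuoiuwdoiwd9438u9uirhcfjfo} and bound each factor on its right hand side by $D_p(2^{m-1},B^{2^m},g)$ times an appropriate power of $\delta^{-(\eta_p+\epsilon)}$. The $A_p$-factor will contribute no power of $\delta^{-\eta_p}$, whereas each $D_p(2^l,B^{2^m},g)$ with $0\le l\le m-2$ will pick up a specific such power via a single parabolic rescaling. The stated exponent then emerges from a geometric-series computation.

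For the $A_p$-factor, H\"older's inequality in the probability-type norm $L^p_\sharp$ gives $D_2\le D_p$; then Minkowski in $\ell^{p/2}$ (valid since $p\ge 2$) swaps the spatial sum with the frequency $\ell^2$-sum, and the weight inequality $\sum_{B^{2^{m-1}}\in\B_{2^{m-1}}(B^{2^m})}w_{B^{2^{m-1}}}\lesssim w_{B^{2^m}}$ from \eqref{jgiou tiguyt reym89r--9048-0023=-1r90-8=4-293=-0} absorbs the sub-cube sum into a single weight, yielding $A_p(2^{m-1},B^{2^m},2^{m-1},g)\lesssim D_p(2^{m-1},B^{2^m},g)$. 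For each $D_p(2^l,\cdot,g)$ with $l\le m-2$, the cube $Q_{i,2^l}$ has side $\delta^{2^l}=(\delta^{2^{l+1}})^{1/2}$ and we decouple into sub-cubes of side $\delta^{2^{m-1}}=(\delta^{2^m})^{1/2}$. Proposition~\ref{propo:parabooorescal} applied with $\sigma=\delta^{2^{l+1}}$ and target scale $\delta^{2^m}$ produces the decoupling constant $\D_p(\delta^{2^m-2^{l+1}})$, which by \eqref{9855e1}--\eqref{9855e2} is $\lesssim_\epsilon \delta^{-(\eta_p+\epsilon)(2^m-2^{l+1})}$. Squaring, summing over $Q_{i,2^l}$, extracting the square root and taking the geometric mean over $i$ produces
$$D_p(2^l,B^{2^m},g)\lesssim_\epsilon \delta^{-(\eta_p+\epsilon)(2^m-2^{l+1})}\,D_p(2^{m-1},B^{2^m},g).$$

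Substituting both reductions into \eqref{jhfvheruyg7erigt7t86fo}, the exponents on $D_p(2^{m-1},B^{2^m},g)$ collapse to $(1-\kappa_p)^{m-1}+\sum_{l=0}^{m-2}\kappa_p(1-\kappa_p)^l=1$, and the accumulated power of $\delta^{-(\eta_p+\epsilon)}$ equals $\sum_{l=0}^{m-2}\kappa_p(1-\kappa_p)^l(2^m-2^{l+1})$. Writing this as $2^m[1-(1-\kappa_p)^{m-1}]-2\kappa_p\sum_{l=0}^{m-2}(2(1-\kappa_p))^l$ and evaluating the finite geometric sum (with ratio $2(1-\kappa_p)\ne 1$ guaranteed by \eqref{eriuf8uy9-oo4tu568uy0i6950y890789}), a short algebraic manipulation collapses the expression to $2^m-\frac{2\kappa_p}{2\kappa_p-1}+\frac{(2(1-\kappa_p))^m}{2\kappa_p-1}$, matching the statement. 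The main obstacle is the clean setup of the parabolic rescaling, specifically identifying the correct ratio $\delta^{2^m-2^{l+1}}$ feeding into $\D_p$ and preserving the weight exponent $E$ in both the rescaling and the $A_p$-to-$D_p$ comparison; given those, the concluding arithmetic is routine but must be bookkept carefully.
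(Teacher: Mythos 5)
Your proposal is correct and follows essentially the same route as the paper: bound $A_p(2^{m-1},B^{2^m},2^{m-1},g)\lesssim D_p(2^{m-1},B^{2^m},g)$ via H\"older, Minkowski, and the weight inequality, bound each $D_p(2^l,B^{2^m},g)$ by $\D_n(\delta^{2^m-2^{l+1}},p)\,D_p(2^{m-1},B^{2^m},g)$ via Proposition~\ref{propo:parabooorescal}, then invoke \eqref{9855e1} and substitute into \eqref{jhfvheruyg7erigt7t86fo}. Your geometric-series bookkeeping (which the paper leaves implicit) is also correct and reproduces the stated exponent.
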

\begin{proof}This will follow from Proposition \ref{hegfuyefuyrufyuryfuryowqiuoiuwdoiwd9438u9uirhcfjfo}, once we make a few observations.

First,
\begin{equation}
\label{m8tvuy45t89568y0695y076o0-340}
A_p(2^{m-1},B^{2^m},2^{m-1},g)\lesssim D_p(2^{m-1},B^{2^m},g).
\end{equation}
This is a consequence of H\"older, Minkowski's inequality in $l^{\frac{p}2}$ and \eqref{jgiou tiguyt reym89r--9048-0023=-1r90-8=4-293=-0} (very much like \eqref{rejiogutiugtupgroofpqowefp[owef]-op}).

Second, an application of Proposition \ref{propo:parabooorescal} shows that
$$D_p(2^l,B^{2^m},g)\lesssim \D_n(\delta^{2^m-2^{l+1}},p)D_p(2^{m-1},B^{2^m},g).$$
Finally, combine these with \eqref{9855e1} and Proposition \ref{hegfuyefuyrufyuryfuryowqiuoiuwdoiwd9438u9uirhcfjfo}.

\end{proof}

By replacing $\delta^{2^m}$ with $\delta$, we prefer to write the inequality in Theorem  \ref{dsjfgyetfy3ycqedbf6reudiqtdyteyeuqwhgahg} as follows
$$A_p(2^{-m},B^{1},2^{-m},g)\lesssim_{\epsilon,\nu,m}$$
\begin{equation}
\label{dhvfdhjgvjdkshcajdshgfjhskdhdjfklwkadlhdfjghjkfdjb}
\delta^{-(\eta_p+\epsilon)(1-2^{-m}\frac{2\kappa_p}{2\kappa_p-1}+\frac{(1-\kappa_p)^m}{2\kappa_p-1})}D_p(\frac12,B^{1},g),
\end{equation}
with the implicit constant independent of the cubes $Q_i$. Here the assumption is $l(Q_i)\ge \delta^{2^{-m}}$.

\bigskip

 Let $B=B^1$ be a  cube in $\R^n$ with $l(B)=\delta^{-1}$. Consider  $n$  $\nu$-transverse cubes $Q_1,\ldots,Q_n\subset[0,1]^{n-1}$ with side length $\mu\ge \delta^{2^{-m}}$. Let as before $\Di_{\mu^{-1}}(B)$ denote the partition of $B$ using cubes $\Delta$ with $l(\Delta)=\mu^{-1}$. Denote also by $\B_m(B)$  the partition of $B$ using cubes $\Delta_m$ with $l(\Delta_m)=\delta^{-2^{-m}}$.
\medskip

We may write, first by combining Cauchy--Schwarz and \eqref{jgiou tiguyt reym89r--9048-0023=-1r90-8=4-293=-0}

\begin{equation}
\label{ fu9ety8290-78-=1=23 -r02v9=--e322}
[\frac1{|\B_{\mu^{-1}}(B)|}\sum_{\Delta\in\Di_{\mu^{-1}}(B)}(\prod_{i=1}^n\|E_{Q_i}g\|_{L^p_\sharp(w_{\Delta,10E})}^p)^{\frac{1}n}]^{\frac1p}\lesssim 
\end{equation}
\begin{equation}
\label{ fu9ety8290-78-=1=23 -r02v9=--e32}
\delta^{-(n-1)2^{-m-1}}[\frac1{|\B_m(B)|}\sum_{\Delta_m\in\B_m(B)}(\prod_{i=1}^n\|(\sum_{q_i\in\Di_{\delta^{2^{-m}}}(Q_i)}|E_{q_i}g|^{2})^{1/2}
\|_{L^p_\sharp(w_{\Delta_m,10E})}^p)^{\frac{1}n}]^{\frac1p},
\end{equation}
then using Minkowski's inequality and \eqref{ nghbugtrt90g0-er9t-9} (recall that $p\le 20$)
\begin{equation}
\label{ fu9ety8290-78-=1=23 -r02v9=--e321}
\le \delta^{-(n-1)2^{-m-1}}[\frac1{|\B_m(B)|}\sum_{\Delta_m\in\B_m(B)}(\prod_{i=1}^n\sum_{q_i\in\Di_{\delta^{2^{-m}}}(Q_i)}\|E_{q_i}g\|_{L^p_\sharp(w_{\Delta_m,10E})}^{2})^{\frac{p}{2n}}]^{\frac1p}
\end{equation}
$$\le \delta^{-(n-1)2^{-m-1}}[\frac1{|\B_m(B)|}\sum_{\Delta_m\in\B_m(B)}(\prod_{i=1}^n\sum_{q_i\in\Di_{\delta^{2^{-m}}}(Q_i)}\|E_{q_i}g\|_{L^2_\sharp(w_{\Delta_m,E})}^{2})^{\frac{p}{2n}}]^{\frac1p}$$
$$=A_p(2^{-m},B^{1},2^{-m},g).$$
Invoking \eqref{dhvfdhjgvjdkshcajdshgfjhskdhdjfklwkadlhdfjghjkfdjb} and removing the normalization, we conclude that
$$[\sum_{\Delta\in\Di_{\mu^{-1}}(B)}(\prod_{i=1}^n\|E_{Q_i}g\|_{L^p(w_\Delta,10E)}^p)^{\frac{1}n}]^{\frac1p}
\lesssim_{\epsilon,\nu,m}$$$$
\delta^{-(\eta_p+\epsilon)(1-2^{-m}\frac{2\kappa_p}{2\kappa_p-1}+\frac{(1-\kappa_p)^m}{2\kappa_p-1})}\delta^{-(n-1)2^{-m-1}}\left[\prod_{i=1}^n\sum_{q_i\in\Di_{\delta^{1/2}}(Q_i)}\|E_{q_i}g\|_{L^p(w_{B,E})}^2\right]^{\frac1{2n}}.$$

By taking a supremum over all $Q_i$, $B$, $g$ as above, we deduce the following inequality, which is a stronger substitute for \eqref{hjgvhdgfyrtefuiyecuryueuyguirtygio}
\begin{equation}
\label{dschjgdshdcyfyuvtioduewuyt}
\D_n(\delta,p,\nu,m,E)\lesssim_{\epsilon,\nu,m}
\delta^{-(\eta_p+\epsilon)(1-2^{-m}\frac{2\kappa_p}{2\kappa_p-1}+\frac{(1-\kappa_p)^m}{2\kappa_p-1})}\delta^{-(n-1)2^{-m-1}}.
\end{equation}
Combining this with Theorem \ref{ch ft7wbtfgb6n17r782brym9,iqmivpk[l} (use \eqref{fjdnhytg7879ro3[0r9i845689409er87y76t76r6}) and \eqref{9855e2} we may now write
$$\delta_l^{-\eta_p+\epsilon+\epsilon(\nu)}\lesssim_{\epsilon,\nu,m}
\delta_l^{-(\eta_p+\epsilon)(1-2^{-m}\frac{2\kappa_p}{2\kappa_p-1}+\frac{(1-\kappa_p)^m}{2\kappa_p-1})}\delta_l^{-(n-1)2^{-m-1}}$$
for some sequence $\delta_l$ converging to zero. This in turn forces
$$-\eta_p+\epsilon+\epsilon(\nu)\ge -(\eta_p+\epsilon)(1-2^{-m}\frac{2\kappa_p}{2\kappa_p-1}+\frac{(1-\kappa_p)^m}{2\kappa_p-1})-(n-1)2^{-m-1}$$
for each $\epsilon,\nu>0$. Thus, letting $\epsilon,\nu\to 0$ we get
$$-\eta_p\ge -\eta_p(1-2^{-m}\frac{2\kappa_p}{2\kappa_p-1}+\frac{(1-\kappa_p)^m}{2\kappa_p-1})-(n-1)2^{-m-1},$$
and by rearranging terms
\begin{equation}
\label{jdhfkfdhkfkjhgfsdlifhburehro3ytyewyuiweyu}
(n-1)2^{-1}\ge \eta_p\frac{[2(1-\kappa_p)]^m-2\kappa_p}{1-2\kappa_p}.
\end{equation}
As this holds for each $m\ge 1$, \eqref{eriuf8uy9-oo4tu568uy0i6950y890789} will immediately force $\eta_p=0$.
\medskip

Let us now show that $\eta_{p_n}=0$ for $p_n=\frac{2(n+1)}{n-1}$. Let $B\subset \R^n$ be a cube with  $l(B)=\delta^{-1}$.
Using inequality \eqref{ nghbugtrt90g0-er9t-9}, for  $p<p_n$ we can write
$$\|E_{[0,1]^{n-1}}g\|_{L^{p_n}({B})}\lesssim \|E_{[0,1]^{n-1}}g\|_{L^{p}(w_{B})}.$$
Combining this with H\"older's inequality we get
$$\|E_{[0,1]^{n-1}}g\|_{L^{p_n}({B})}\lesssim \D_{n}(\delta,p)(\sum_{Q\in\Di_{\delta^{1/2}}([0,1]^{n-1})}\|E_Qg\|_{L^{p}(w_B)}^2)^{1/2}$$
$$\lesssim_\epsilon \delta^{-\epsilon}\|\textbf{1}\|_{L^{\frac{q}{q-1}}(w_B)}(\sum_{Q\in\Di_{\delta^{1/2}}([0,1]^{n-1})}\|E_Qg\|_{L^{p_n}(w_B)}^2)^{1/2}.$$
Note that $q\to 1$ as $p\to p_n$. Finally, invoke Lemma \ref{nl9}, cf. Remark \ref{aha}.

\end{document}